\documentclass[11pt,a4paper]{article}
\usepackage[OT1]{fontenc}
\usepackage[english]{babel}
\usepackage{amsfonts}
\usepackage{amsthm}
\usepackage{amsmath}
\usepackage{needspace}

\usepackage[hidelinks]{hyperref}
\usepackage{xcolor}

\def\h{ {\cal H} }
\def\z{ {\cal Z} }

\def\b{ {\cal B} }
\def\u{ {\cal U} }

\def\t{ {\cal T} }
\def\s{ {\cal S} }

\def\p{ {\cal P} }

\def\k{ {\cal K} }
\def\f{ {\cal F} }
\def\c{ {\cal C} }
\def\cald{ {\cal D} }
\def\j{ {\cal J} }

\newtheorem{teo}{Theorem}[section]
\newtheorem{prop}[teo]{Proposition}
\newtheorem{lem}[teo]{Lemma}
\newtheorem{coro}[teo]{Corollary}
\newtheorem{defi}[teo]{Definition}
\theoremstyle{definition}
\newtheorem{rem}[teo]{Remark}
\newtheorem{ejem}[teo]{Example}

\newtheorem{Questions}[teo]{Questions}

\title{Graphs of operators as points in the Grassmann manifold}
\author{
	Esteban Andruchow\footnotemark[1]
	\and
	L\'azaro Recht\footnotemark[2]
	\and
	Alejandro Varela\footnotemark[3]
}
\date{}

\begin{document}
	\renewcommand{\thefootnote}{\fnsymbol{footnote}}
	
	\maketitle
	
	\footnotetext[1]{Instituto Argentino de Matem\'atica, `Alberto P. Calder\'on', CONICET, Saavedra 15 3er. piso, (1083) Buenos Aires, 
and Universidad Nacional de General Sarmiento, J.M. Gutierrez 1150, (1613) Los Polvorines, Argentina

e-mail: eandruch@campus.ungs.edu.ar}
	\footnotetext[2]{Instituto Argentino de Matem\'atica, `Alberto P. Calder\'on', CONICET, Saavedra 15 3er. piso, (1083) Buenos Aires, Argentina

e-mail: lrecht@gmail.com}

\footnotetext[3]{Instituto Argentino de Matem\'atica, `Alberto P. Calder\'on', CONICET, Saavedra 15 3er. piso, (1083) Buenos Aires, 
and Universidad Nacional de General Sarmiento, J.M. Gutierrez 1150, (1613) Los Polvorines, Argentina

e-mail: avarela@campus.ungs.edu.ar}

\begin{abstract}
We study the set $\Gamma$ of graphs of closed, densely defined operators in a Hilbert space $\h$, regarded as a subset of the Grassmann manifold $\p(\h\times\h)$ of orthogonal projections in $\h\times\h$. We show that the subset $\Gamma^b$ of graphs of bounded operators is the open unit ball of $\p(\h\times\h)$ centered at the graph of the zero operator $P_0$ (which projects onto $\h\times\{0\}$).  This ball is diffeomorphic to $\b(\h)$ via the map $T\mapsto P_T$ ($=$ the projection onto the graph ${Gr(T)}$ of $T$).  We  show that graphs of unbounded closed operators lie at the  boundary of $\Gamma$. We also study the existence and characteristics of minimal geodesics of $\p(\h\times\h)$ joining two graphs $Gr(A)$, $Gr(B)$. If $A,B$ are selfadjoint, such a geodesic always exists, and we construct explicitly a distinguished exponent using the five-space decomposition associated to the pair of subspaces $Gr(A)$, $Gr(B)$. An explicit low-dimensional example shows that the geodesic joining two graphs need not remain inside $\Gamma$, i.e., does not consist entirely of graphs. We also relate graphs of compact operators to the restricted Grassmannian, and study the problem of common complements for pairs $Gr(S)$, $Gr(T)$, giving positive results when one operator is bounded or under lower boundedness conditions.
\end{abstract}

\bigskip

{\bf 2020 MSC: }  58B20, 47A05, 47B25, 53C22.

{\bf Keywords:}  Projection, closed operator, graph of an operator, Grassmann manifold, geodesics.

\section{Introduction}
Let $T:D(T)\subset\h\to\h$ be a possibly unbounded closed operator acting on the complex Hilbert space $\h$, with dense domain $D(T)$. Denote by $Gr(T)$ the graph of $T$:
$$
Gr(T)=\{(f,Tf)\in\h\times\h: f\in D(T)\},
$$
which is a closed subspace of $\h\times\h$. 

Given a Hilbert space $\j$, we denote by $\p(\j)$ the set of closed subspaces of $\j$. To each subspace $\s\in\p(\j)$, corresponds a unique projection, the orthogonal projection $P_\s$ onto $\s$. We use this one to one correspondence to parametrize and describe $\p(\j)$, thus writing 
$$
\p(\j)=\{P_\s: \s \hbox{ closed subspace of } \j\}=\{P\in\b(\j):  P^2=P^*=P\},
$$
where $\b(\j)$ denotes the algebra of bounded operators acting in $\j$.

In this paper we study the position of the graphs $Gr(T)$ as points of $\p(\h\times\h)$. We abbreviate $P_T:=P_{Gr(T)}$, the orthogonal projection onto $Gr(T)$. This work is  a follow up of Section 5 of the paper \cite{eder}, where this matter was considered.  We will study in particular the subsets of $\p(\h\times\h)$ consisting of graphs of operators that are selfadjoint  or bounded. In symbols
\begin{equation}\label{graficas no acotadas}
\Gamma=\{P_T: T:D(T)\subset \h\to \h \hbox{ is closed }\}, \  \ \ \ \ \Gamma_s=\{P_A\in\Gamma:  A \hbox{ is selfadjoint}\}, 
\end{equation}

\begin{equation}\label{graficas acotadas}
\Gamma^b=\{P_S: S \hbox{ is bounded}\} \ \  \ \hbox{ and } \  \ \ \Gamma^b_s=\{P_B\in\Gamma_s: B \hbox{ is bounded}\}.
\end{equation}

Denote by $\Pi_1=P_{0}$  the projection onto $\h\times\{0\}$ (the graph of the $0$ operator). We will see  (Theorem 3.2) that $\Gamma^b$ is the unit ball  
$$
\Gamma^b=\{P\in\p(\h\times\h): \|P-\Pi_1\|<1\}.
$$
This observation enables the fact that the map
$$
\b(\h)\ni T\to P_T\in\Gamma^b
$$
is a $C^\infty$ diffeomorphism (Proposition 3.3). As a consequence, $\Gamma^b_s$ is a complemented submanifold of $\b(\h\times\h)$. Graphs of unbounded closed operators lie in the frontier $\partial\Gamma$ of $\Gamma$ (Proposition 3.7). 

The contents of the rest of the paper are, briefly, the following. The geodesics of the space of projections of a C$^*$-algebra have been thoroughly studied: \cite{pr}, \cite{cpr proyecciones} for an abstract C$^*$-algebra, or for instance \cite{p-q}, \cite{grassmann}, for the specific case of the algebra $\b(\j)$. In Sections 4 and 5 we examine geodesics between graphs. We consider the following questions: 1) do there exist geodesics between graphs?; 2) if the answer is positive, do any of  these geodesics remain in $\Gamma$? We also   consider graphs of compact operators (Section 6). In Section 7 we consider unitary equivalent operators, and show that their graphs can be joined by a smooth curve of graphs. In Section 8 we characterize graphs which have a common complement. In Section 9 we study an unbounded idempotent corresponding to the graph of a closed unbounded operator.

\section{Preliminaries}

If $T$ is a linear operator with domain $D(T)$, we denote by $R(T)$ and $N(T)$ the range and nullspace of $T$, and by $r(T)=\dim R(T)$ and $n(T)=\dim N(T)$ the  rank and nullity of $T$ . Let $\s,\t$ be subspaces of a Hilbert space $\j$,  $\s+\t$ in general denotes the sum, $\s\dot{+}\t$  the direct sum (when $\s+\t$ is closed and $\s\cap\t=\{0\}$), and $\s\oplus\t$ the orthogonal sum  (when $\s\perp\t$). If $\j=\s\dot{+}\t$, we denote by $P_{\s\parallel\t}$ the projection onto $\s$ with nullspace $\t$.
$\b(\j)$ denotes the algebra of bounded linear operators, $\u(\j)$ the group of unitary operators acting in the Hilbert space $\j$. 

We briefly describe in the following remark the differential geometry of the Grassmann manifold $\p(\j)$, based on the papers \cite{pr} and \cite{cpr proyecciones}. These papers deal in fact with the set of projections of an arbitrary unital C$^*$-algebra,   see  \cite{grassmann} for an  abridged description focused on the algebra $\b(\j)$.

\needspace{5\baselineskip}
\begin{rem}\label{21}
	\leavevmode
	\begin{enumerate}
\item
The unitary group $\u(\j)$ of $\j$ acts on $\p(\j)$: $U\cdot P=UPU^*$ (or equivalently, as subspaces: if $P=P_\s$, $U\cdot \s=U\s$). The action is locally transitive: if $\|P-Q\|<1$, then there exists a unitary $U=U(P,Q)$ (which can be chosen as a $C^\infty$ map on $P, Q$) such that $UPU^*=Q$. Since the group $\u(\j)$ is connected, the orbits of this action are the connected components of $\p(\j)$.
\item
The connected components of $\p(\j)$ are parametrized by the rank and co-rank. If $i,j\in\mathbb{N}\cup\{\infty\}$, such that $i+j=\infty$, the connected components
are
$$
\p_{i}^j:=\{P\in\p(\j): r(P)=i, n(P)=j\}.
$$
We abbreviate $\p_\infty:=\p_\infty^\infty$.
\item
Each component $\p_i^j$ is a  complemented $C^\infty$ submanifold of $\b(\j)$. Elements $P,Q$ in different components of $\p(\j)$ satisfy (due to item 1.) that $\|P-Q\|=1$. Thus the whole set $\p(\j)$ is a complemented submanifold of $\b(\h)$.  Fix $P_0\in\p_i^j$. Then the map
$$
\pi_{P_0}:\u(\j)\to \p_i^j, \ \pi_{P_0}(U)=UPU^*
$$
is a $C^\infty$ submersion. 
\item
There is a natural linear connection in $\p(\j)$. It is based on the following graduation of $\b(\h)$ in terms of a fixed projection. Namely, if $P\in\p(\j)$ elements of $\b(\j)$ are written as $2\times 2$ matrices in terms of $\j=R(P)\oplus N(P)$. Then put
$$
\cald_P=\{X\in\b(\j): X \hbox{ is diagonal}\}=\{X\in\b(\j): [X,P]=0\}.
$$
and
$$
\c_P=\{Y\in\b(\j): Y \hbox{ is codiagonal}\},
$$
which also can also be described as $\c_P=\{Y\in\b(\j): Y \hbox{ anticommutes with } 2P-I\}$ (note that elements in $\cald_P$ commute with $2P-I$).  Put ${\bf C}_P$ the natural projection from $\b(\j)$ onto $\c_P$:
$$
{\bf C}_P(X)=PXP^\perp+P^\perp XP.
$$ 
The covariant derivative of the linear connection is
$$
\frac{DX}{dt}={\bf C}_{\gamma(t)}\left(\frac{d}{dt}X(t)\right),
$$
where $X(t)$ is a smooth  tangent field along $\gamma(t)$ (meaning that $X(t)$ is a smooth curve of selfadjoint operators, taking values in $\cald_{\gamma(t)}$ for each $t$).
The geodesics of this connections are
$$
\delta(t)=e^{itX}Pe^{-itX},
$$
where $X=X^*\in\c_P$.
\item
If one measures tangent vectors with the usual norm of $\b(\j)$, and computes lengths of smooth curves $\gamma:I\to\p(\j)$ by means of $\ell(\gamma)=\int_{_I} \|\dot{\gamma}(t)\|d t$, then geodesics curves are minimal for time $\frac{|t|}{\|X\|}\le \pi/2$.
\item 
This fact can be found in \cite{p-q}. There exists a geodesic $\delta$ joining $\delta(0)=P$ and $\delta(1)=Q$ if and only if
$$
\dim R(P)\cap N(Q)=\dim N(P)\cap R(Q).
$$
Such geodesic is unique if and only if these dimensions are zero (i.e., the intersections are trivial). In any case, the geodesic is minimal (for the metric described in 5.). 
\end{enumerate}

\end{rem}

If $A:D(A)\subset\h\to\h$ is selfadjoint,  the spectral theorem (see for instance \cite{readsimon}) states that there exists an isometric isomorphism $F:\h\to L^2(\mu)$ of a  regular Borel finite measure space $(M,\mu)$ and a real valued measurable function $\varphi$ in $M$, which is finite (pp), such that $F(D(A))=\{f \in L^2(\mu): f\varphi\in L^2(\mu)\}$, and $FAF^{-1}f=f\varphi$, for $f\in F(D(A))$. The functional calculus in $A$ can be presented as follows: if $\psi:\mathbb{R}\to\mathbb{R}$ is a Borel function, $\psi(A)$ is determined by $F(D(\psi(A))=\{f\in L^2(\mu): \psi(f)\in L^2(\mu)\}$, and $F\psi(A)F^{-1}f=\psi(f)$. In particular, if $\psi$ is  bounded, $\psi(A)\in\b(\h)$.

If $T:D(T)\subset\h\to\h$ is closed with dense domain, then it has a polar decomposition
$$
T=V|T|
$$
where
$V: \overline{R(T^*)}\to\overline{R(T)}$ is a partial isometry and $|T|=(T^*T)^{1/2}$ is a (possibly unbounded) selfadjoint non negative operator with $D(|T|)=D(T)$. The partial isometry is unique with initial space $\overline{R(|T|)}$. Moreover,  
$$
T^*=|T|V^*,  |T^*|=V|T|V^* \hbox{ and } T^*=V^*|T^*|
$$
is the polar decomposition of $T^*$.

Let us now recall  the explicit matrix form of the projection onto $Gr(T)$, for $T$ closed with dense domain (see for instance Proposition 5.2 in \cite{eder}):
\begin{equation}\label{P Gr(T)}
P_{T}=\left(\begin{array}{cc} (I+|T|^2)^{-1} & (I+|T|^2)^{-1}T^* \\ T(I+|T|^2)^{-1} & T(I+|T|^2)^{-1}T^* \end{array}\right).
\end{equation}
Notice that all entries are densely defined bounded operators (we consider them defined in the whole space $\h$, using the same symbol). In particular, if $A$ is selfadjoint,
\begin{equation}\label{P Gr(A)}
P_{A}=\left(\begin{array}{cc} (I+A^2)^{-1} & A(I+A^2)^{-1} \\ A(I+A^2)^{-1} & A^2(I+A^2)^{-1} \end{array}\right).
\end{equation}

\begin{rem}\label{remark 21}
If $T$ is closed and densely defined, and $T=V|T|$ is its polar decomposition, then it is elementary to check that
$$
P_T=\left(\begin{array}{cc} I & 0 \\ 0 & V \end{array} \right) P_{|T|} \left(\begin{array}{cc} I & 0 \\ 0 & V^* \end{array} \right) \ \hbox{ and } \ \left(\begin{array}{cc} I & 0 \\ 0 & V^* \end{array} \right)P_T\left(\begin{array}{cc} I & 0 \\ 0 & V \end{array} \right)=P_{|T|}
$$
(for the right hand equality use that $T^*V=|T|=V^*T$).
In particular, if we denote ${\bf V}:=\left(\begin{array}{cc} I & 0 \\ 0 & V \end{array} \right)$, then ${\bf V}$ is a partial isometry, with initial space $\h\times N(T)^\perp$ and final space $\h\times \overline{R(T)}$.
\end{rem}

The following operator will be useful:
\begin{equation}\label{omega}
\Omega:\h\times\h\to\h\times\h, \ \ \Omega(f,g)=(-g,f).
\end{equation}
Note that $\Omega^2=-I$ and $\Omega^*=-\Omega$, in particular, $\Omega$ is a unitary operator.
\begin{rem}\label{propiedad omega}
The following  fact is elementary (see  Lemma 5.4 of \cite{eder} for a proof): if $T$ is a closed densely defined operator, then 
$$
Gr(T)^\perp=\{(-T^*f,f): f\in D(T^*)\}.
$$
Therefore, we have
\begin{enumerate}
\item
$$
\Omega(Gr(T))^\perp)=\Omega(Gr(T))^\perp=\{(-f,-T^*f): f\in D(T^*)\}=Gr(T^*).
$$
\item
In particular, for $A$  a selfadjoint operator, we have that
$Gr(A)^\perp=\{(-Af,f)\in\h\times\h: f\in D(A)\}$ and 
$$
\Omega(Gr(A))=Gr(A)^\perp.
$$
\item
For $T$ closed and densely defined,
$$
\Omega P_T \Omega^*=P_{T^*}^\perp.
$$
Indeed, 
$$
\Omega P_T\Omega^*=\Omega P_{Gr(T)}\Omega^*=P_{\Omega(Gr(T))}=P_{Gr(T^*)^\perp}= P_{T^*}^\perp.
$$
\end{enumerate}
\end{rem}

Finally, let us point out a few basic properties of $\Gamma$.
\begin{rem}
Let $T$ be a closed densely defined operator.
\begin{enumerate}
\item
The set $\Gamma$ of graphs  lies in the component $\p_{\infty}$ of $\p(\h\times\h)$. Indeed, it is apparent that $\dim Gr(T)=+\infty, \dim Gr(T^*)=+\infty$, and therefore $Gr(T)^\perp=\Omega(Gr(T^*))$ is also infinite dimensional. 
\item
Denote by $\Pi_1, \Pi_2$ the projections onto, respectively, the first and second  coordinates of $\h\times\h$: $\Pi_1(f,g)=(f,0)$,  $\Pi_2(f,g)=(0,g)$.
A graph $Gr(T)$ corresponds to a bounded operator if and only if  $\Pi_1(Gr(T))=\h\times\{0\}$. Indeed, this  condition means that to every $f\in\h$ corresponds a vector $g\in\h$ such that $(f,g)\in Gr(T)$, i.e., $g=Tf$, then $D(T)=\h$, and thus, by the closed graph theorem, $T$ is bounded. The converse is obvious.
\end{enumerate}

\end{rem}

\section{Structure of $\Gamma$}

In this section we shall investigate the structures of the sets $\Gamma, \Gamma_s, \Gamma^b$ and $\Gamma_s^b$. We shall see that $\Gamma^b$ is an open subset of $\p(\h\times\h)$. More precisely, $\Gamma^b$ is the norm-ball of radius $1$ in $\p(\h\times\h)$, centered at the graph $\h\times\{0\}$ of the zero operator:
$$
\Gamma^b=\{P\in\p(\h\times\h): \|P-\Pi_1\|<1\}.
$$
Thus a complemented $C^\infty$-submanifold of $\b(\h\times\h)$.

We shall also prove  that  $\Gamma_s^b$ is a complemented $C^\infty$-submanifold of $\Gamma^b$ (and then also a complemented submanifold of $\b(\h\times\h)$).

To proceed further, we recall the following result (see for instance Theorem 9.35 in the book by F. Deutsch \cite{deutsch2},  or V. Havin and B. J\"oricke \cite{havinjoricke}:
\begin{teo}\label{havin}  {\rm \cite{deutsch2}} 
 
Let $P, Q$ be orthogonal projections in $\j$. Then the following are equivalent:
\begin{enumerate}
\item
$R(P)\cap R(Q)=\{0\}$  and $R(P)+R(Q)$   is closed.
\item
$\|PQ\|<1$.
\end{enumerate}
\end{teo}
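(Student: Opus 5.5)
We prove the two implications separately, writing $\s=R(P)$ and $\t=R(Q)$, and use that $\|PQ\|=\|QP\|=\sup\{|\langle x,y\rangle|: x\in\s,\ y\in\t,\ \|x\|=\|y\|=1\}$, i.e.\ $\|PQ\|$ is the cosine of the angle between $\s$ and $\t$. This identity holds because $\|PQ\|=\sup_{\|u\|,\|v\|\le 1}|\langle PQu,v\rangle|=\sup|\langle Qu,Pv\rangle|$.

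\emph{(2)$\Rightarrow$(1).} Suppose $c:=\|PQ\|<1$. If $x\in\s\cap\t$ then $x=PQx$, so $\|x\|\le c\|x\|$ and hence $x=0$. For closedness, take $x\in\s$ and $y\in\t$. Then
$$
\|x+y\|^2\ge \|x\|^2+\|y\|^2-2c\|x\|\|y\|\ge (1-c)(\|x\|^2+\|y\|^2).
$$
Consequently the map $\s\times\t\ni(x,y)\mapsto x+y$ is bounded below. Its range is therefore closed, since it is the image of a complete space under an injective operator with closed range. Thus $\s+\t$ is closed.

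\emph{(1)$\Rightarrow$(2).} Assume $\s\cap\t=\{0\}$ and $\s+\t$ is closed. The map $J:\s\times\t\to\s+\t$, $J(x,y)=x+y$, is a bounded bijection between Banach spaces. By the open mapping theorem it has a bounded inverse, so there is $k>0$ with $\|x+y\|^2\ge k(\|x\|^2+\|y\|^2)$.

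Now suppose, towards a contradiction, that $\|PQ\|=1$. Then there are unit vectors $x_n\in\s$ and $y_n\in\t$ with $\langle x_n,y_n\rangle\to1$; we may rotate phases to make the inner product real and positive. It follows that $\|x_n-y_n\|^2=2-2\,\mathrm{Re}\langle x_n,y_n\rangle\to0$. On the other hand, applying the lower bound with $y=-y_n$ gives $\|x_n-y_n\|^2\ge 2k$, a contradiction. Hence $\|PQ\|<1$.

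The main point, and the only non-elementary step, is the use of the open mapping theorem in (1)$\Rightarrow$(2) to convert the closedness of the sum into a quantitative lower bound. The rest is direct.
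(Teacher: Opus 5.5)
Your proof is correct. The paper gives no argument for this statement: it quotes it from Deutsch (Theorem 9.35) and Havin--J\"oricke and uses it as a black box, so there is no in-paper proof to compare against.

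What you give is a standard self-contained argument:
\begin{itemize}
\item identify $\|PQ\|$ with the cosine of the Dixmier (minimal) angle between $\s$ and $\t$;
\item for (2)$\Rightarrow$(1), derive the estimate $\|x+y\|^2\ge(1-\|PQ\|)(\|x\|^2+\|y\|^2)$;
\item for (1)$\Rightarrow$(2), apply the open mapping theorem to $(x,y)\mapsto x+y$ and argue by contradiction with almost-parallel unit vectors.
\end{itemize}

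One sentence needs repair. You justify closedness of $\s+\t$ by calling it the image of a complete space under an injective operator ``with closed range'', which is circular. The real reason is that an operator bounded below on a complete space has closed range. If $x_n+y_n$ is Cauchy, your inequality makes $(x_n,y_n)$ Cauchy in $\s\times\t$. Its limit $(x,y)$ then satisfies $x+y=\lim(x_n+y_n)$.

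A minor point: writing $\|PQ\|$ as a supremum over unit vectors tacitly assumes both ranges are nonzero. Otherwise $\|PQ\|=0$ and the statement is trivial.
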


As a consequence, we have the following characterization of graphs of bounded operators. This  result is  known, for instance, one can build  a proof using the results in  \cite{chung}. We include an elementary argument, based on the above theorem and the Krein-Krasnoselski-Milman formula.
\begin{teo}\label{42}
Let $\s$ be a closed subspace of $\h\times\h$. Then 
$$P_\s\in\Gamma^b \ \iff \ 
 \|P_\s-\Pi_1\|<1.
$$
\end{teo}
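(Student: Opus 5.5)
The plan is to reduce both directions to Theorem \ref{havin} by means of the Krein--Krasnoselski--Milman formula. For orthogonal projections $P,Q$ that formula reads
$$
\|P-Q\|=\max\{\|PQ^\perp\|,\|QP^\perp\|\}.
$$
Take $Q=\Pi_1$. Then $\Pi_1^\perp=\Pi_2$ is the projection onto $\{0\}\times\h$, and we get
$$
\|P_\s-\Pi_1\|<1 \iff \|P_\s\Pi_2\|<1 \ \hbox{ and } \ \|\Pi_1P_\s^\perp\|<1 .
$$
By Theorem \ref{havin}, the first inequality is equivalent to
$$
\s\cap(\{0\}\times\h)=\{0\} \ \hbox{ and } \ \s+\{0\}\times\h \hbox{ closed}.
$$
The second inequality is equivalent to
$$
(\h\times\{0\})\cap\s^\perp=\{0\} \ \hbox{ and } \ \h\times\{0\}+\s^\perp \hbox{ closed}.
$$

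($\Rightarrow$) Let $\s=Gr(T)$ with $T\in\b(\h)$. Clearly $Gr(T)\cap(\{0\}\times\h)=\{0\}$. Also $(f,g)=(f,Tf)+(0,g-Tf)$, so $Gr(T)+\{0\}\times\h=\h\times\h$, which is closed. For the other pair we use Remark \ref{propiedad omega}, which gives $Gr(T)^\perp=\{(-T^*g,g):g\in\h\}$, where $T^*$ is bounded. A vector of this form lies in $\h\times\{0\}$ only if $g=0$. Moreover $(f,g)=(-T^*g,g)+(f+T^*g,0)$, so $\h\times\{0\}+\s^\perp=\h\times\h$. Hence both norms are $<1$, and so $\|P_\s-\Pi_1\|<1$.

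($\Leftarrow$) Assume $\|P_\s-\Pi_1\|<1$. The condition $\s\cap(\{0\}\times\h)=\{0\}$ says that for each $f$ there is at most one $g$ with $(f,g)\in\s$. Hence $\s=Gr(T)$ for a linear operator $T$ with $D(T)=\Pi_1(\s)$, and $T$ is closed because $\s$ is closed. It remains to prove $D(T)=\h$; the closed graph theorem then gives boundedness.

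For this, consider the subspace $\s+\{0\}\times\h$, which is closed by the first condition. Its orthogonal complement is $\s^\perp\cap(\h\times\{0\})$, which is $\{0\}$ by the second condition. Therefore $\s+\{0\}\times\h=\h\times\h$, so every $(f,0)$ can be written as $(f,g)-(0,g)$ with $(f,g)\in\s$. That is, $f\in D(T)$ for every $f\in\h$. This also shows the domain is dense, so $P_\s\in\Gamma^b$.

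The only delicate point is invoking the Krein--Krasnoselski--Milman identity correctly and matching each of its two terms to the right pair of subspaces in Theorem \ref{havin}. The remaining step, closedness of the sum together with a trivial orthogonal complement forcing the sum to be the whole space, is routine.
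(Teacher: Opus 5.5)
Your proof is correct and follows the paper's argument essentially step for step. The Krein--Krasnoselski--Milman formula reduces $\|P_\s-\Pi_1\|<1$ to $\|P_\s\Pi_2\|<1$ and $\|\Pi_1P_\s^\perp\|=\|P_\s^\perp\Pi_1\|<1$, and Theorem \ref{havin} turns these into direct-sum conditions; in the converse, closedness of $\s+\{0\}\times\h$ together with $(\s+\{0\}\times\h)^\perp=\s^\perp\cap(\h\times\{0\})=\{0\}$ gives $D(T)=\h$, and the closed graph theorem finishes, exactly as in the paper.
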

\begin{proof}
Suppose first that $\s=Gr(S)$ for $S$ a bounded operator. Then clearly 
$$
R(P_S)\cap R(\Pi_2)=\{(f,Sf): f\in\h\}\cap\{0\}\times\h=\{(0,0)\},
$$
and for $(f,g)\in\h\times\h$,  
$$
(f,g)=(f,Sf)+(0,g-Sf)\in R(P_\s)+ R(\Pi_2).
$$
In particular, $R(P_\s)+R(\Pi_2)$ is closed. 
Then, by Theorem \ref{havin}, $\|P_\s \Pi_2\|<1$. 
On the other hand, $R(P_\s^\perp)=Gr(S)^\perp$ and $R(\Pi_1)=\h\times\{0\}$ are in direct sum, equal to $\h\times\h$. Indeed,
first note that $Gr(S)^\perp\cap \h\times\{0\}=\{0\}$: if $(-S^*f,f)=(g,0)$, then clearly $g=0$. Also,  any $(f,g)\in\h\times\h$ can be written
$$
(f,g)=(-S^*g,g)+(f+S^*g,0)\in Gr(S)^\perp + \h\times\{0\}.
$$
Therefore, again by Theorem \ref{havin}, $\|P_\s^\perp\Pi_1\|<1$. 
Then,  by the Krein-Krasnoselsky-Milman formula, 
$$
\|P_\s-\Pi_1\|=\max\{\|P_\s-P_\s\Pi_1\|, \|\Pi_1-\Pi_1P_\s\|\}=\max\{\|P_\s\Pi_2\|, \|\p_\s^\perp\Pi_1\|\}<1.
$$

Conversely, suppose that $\|P_\s-\Pi_1\|<1$. By the above formula, we have $\|P_\s\Pi_2\|<1$ and $\|P_\s^\perp \Pi_1\|<1$. The first inequality, again by Theorem \ref{havin}, implies that $\s\cap \{0\}\times\h=\{0\}$ and $\s + \{0\}\times \h$ is closed. The second inequality, likewise, implies that $\s^\perp \cap\ \h\times \{0\}=\{0\}$. Then $\s + \{0\}\times\h=\left(\s^\perp \cap\ \h\times \{0\}\right)^\perp$ is dense in $\h\times\h$. Therefore, 
$$
\s \dot{+} \{0\}\times\h=R(P_\s)\dot{+}R(\Pi_2)=\h\times\h.
$$
The fact that $R(P_\s)\cap R(\Pi_2)=\{(0,0)\}$ implies that $\s$ is the graph of a linear operator $S$: if $(f,g), (f,h)\in\s$, then $(0,g-h)=(f,g)-(f,h)\in \s\cap R(\Pi_2)$, i.e., $g=h$. The fact that $\s+ \{0\}\times \h=\h\times\h$, implies that  
$$
\Pi_1(\s)=\Pi_1(\s+\{0\}\times\h)=\Pi_1(\h\times\h)=\h\times\{0\},
$$
and therefore the domain of this operator $S$ is $\h$. Since the subspace $\s$ is closed, by the closed graph theorem, the operator $S$ is bounded. 
\end{proof}

Once we know that $\Gamma^b=\{P\in\p(\h\times\h): \|P-\Pi_1\|<1\}$, and in particular  is open in $\p(\h\times\h)$, we can establish the following result:

\begin{prop}\label{prop 13}
The map $\gamma:\b(\h)\to \Gamma^b$, $\gamma(T)=P_{Gr(T)}$ is a $C^\infty$  diffeomorphism.
\end{prop}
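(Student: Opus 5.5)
We write $\gamma(T)$ explicitly and produce an explicit smooth inverse. For $T\in\b(\h)$, formula (\ref{P Gr(T)}) gives
$$
\gamma(T)=\left(\begin{array}{cc} (I+T^*T)^{-1} & (I+T^*T)^{-1}T^* \\ T(I+T^*T)^{-1} & T(I+T^*T)^{-1}T^* \end{array}\right).
$$
Every entry is built from $T$ and $T^*$ by products, sums and inversion of the invertible operator $I+T^*T$. These operations are real-analytic on $\b(\h)$: $T\mapsto T^*$ is real-linear and bounded, and inversion is analytic on the open set of invertibles. Hence $\gamma$ is $C^\infty$ as a map into $\b(\h\times\h)$, viewed as a real Banach space. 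Its image lies in $\Gamma^b$, which is an open subset of the submanifold $\p(\h\times\h)$ by Theorem \ref{42}, so $\gamma$ is $C^\infty$ into $\Gamma^b$. Injectivity is clear, since the graph determines the operator. Surjectivity is exactly Theorem \ref{42}: every $P\in\Gamma^b$ is $P_S$ for some bounded $S$.

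The main step is to show that the inverse $P\mapsto S$ is smooth. I would write $P\in\Gamma^b$ as a $2\times2$ matrix $(P_{ij})$ with respect to $\h\times\h$. From the formula, $P_{11}=(I+S^*S)^{-1}$ and $P_{21}=S(I+S^*S)^{-1}$, so
$$
S=P_{21}P_{11}^{-1}.
$$
The key claim is that $P_{11}=\Pi_1P\Pi_1|_{\h\times\{0\}}$ is invertible on all of $\Gamma^b$, and also on an open neighbourhood of $\Gamma^b$ in $\b(\h\times\h)$. On $\Gamma^b$ this is immediate, since $P_{11}=(I+S^*S)^{-1}$.

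For smoothness of the inverse as a map on the manifold, it suffices to exhibit a smooth map defined on an open set of $\b(\h\times\h)$ that restricts to $\gamma^{-1}$. Take
$$
\Phi(X)=X_{21}X_{11}^{-1},
$$
defined on the open set $\{X : X_{11}\hbox{ invertible}\}$, which contains $\Gamma^b$. This map is analytic, because entry extraction is linear and bounded and inversion is analytic. Its restriction to $\Gamma^b$ is $\gamma^{-1}$.

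Alternatively, we can check invertibility of $P_{11}$ directly from $\|P-\Pi_1\|<1$. We have $\|\Pi_1(P-\Pi_1)\Pi_1\|<1$, so $P_{11}-I$ has norm less than $1$ and $P_{11}$ is invertible by the Neumann series. This avoids even using the explicit form.

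Then $\gamma$ and $\gamma^{-1}=\Phi|_{\Gamma^b}$ are both $C^\infty$, so $\gamma$ is a diffeomorphism onto $\Gamma^b$. The only delicate point is that smoothness "into" and "from" $\Gamma^b$ refers to the submanifold structure of $\p(\h\times\h)$ from Remark \ref{21}. For a complemented submanifold, a map into it that is smooth into the ambient space is smooth into the submanifold. Likewise, the restriction of an ambient smooth map to the submanifold is smooth. So the argument reduces to these two standard facts.
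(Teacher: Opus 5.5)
Your proposal is correct and follows essentially the same route as the paper: the explicit matrix formula shows $\gamma$ is $C^\infty$, and the inverse is the restriction to $\Gamma^b$ of the smooth map $X\mapsto X_{21}X_{11}^{-1}$, defined on the open set of operators whose $(1,1)$ entry is invertible. Your use of the $(2,1)$ entry is the right one: the paper writes ${\bf T}_{1,2}{\bf T}_{1,1}^{-1}$ when defining $\eta$, but its check $\eta(\gamma(T))=T(I+T^*T)^{-1}(I+T^*T)=T$ actually uses the $(2,1)$ entry, and your Neumann-series remark and your comments on smoothness relative to the submanifold structure are harmless additions that the paper leaves implicit.
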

\begin{proof}
We have that $\gamma(T)=\left(\begin{array}{cc} (I+T^*T)^{-1} &  (I+T^*T)^{-1}T^* \\ T(I+T^*T)^{-1} & T(I+T^*T)^{-1}T^* \end{array} \right)$, which is clearly a $C^\infty$ map. Also it is apparent that it is one to one and onto. In particular $\Gamma^b$ is contained in the open subset 
$$
\b_{1,1}:=\{{\bf T}\in\b(\h\times\h): {\bf T}_{1,1} \hbox{ is invertible in } \b(\h)\}.
$$
On this open set, we can define $\eta:\b_{1,1}\to \b(\h)$, $\eta({\bf T})={\bf T}_{1,2}{\bf T}_{1,1}^{-1}$. This map $\eta$ is $C^\infty$, and clearly 
$$
\eta\big|_{\Gamma^b}:\Gamma^b \to \b(\h)
$$
is an inverse for $\gamma$: $\eta(\gamma(T))=T(I+T^*T)^{-1}(I+T^*T)=T$ and $\gamma(\eta(P_{Gr(T)}))=\gamma(T)=P_{Gr(T)}$.
\end{proof} 

Using this diffeomorphism, we have that any submanifold of $\b(\h)$ has a counterpart inside $\Gamma^b$. Denote
\begin{equation}\label{gamma compactos}
\Gamma^c:=\{P_{Gr(K)}: K\in\b(\h) \ \hbox{ is compact}\} \ \hbox{ and }\  \Gamma_s^c:=\{P_{Gr(K)}\in\Gamma^c: K^*=K\}.
\end{equation}

\begin{teo}

\noindent

\begin{enumerate}
\item
$\Gamma_s^b$ is a closed complemented $C^\infty$ submanifold of $\Gamma^b$ (and therefore also of $\b(\h\times\h))$.
\item
$\Gamma_s^c\subset\Gamma^c$ are closed non complemented submanifolds of $\Gamma^b$ (and of $\b(\h\times\h)$).
\item
Moreover, as  topological spaces, in the norm topology of $\b(\h\times\h)$, $\Gamma^b$, $\Gamma^b_s$,  $\Gamma^c$  and $\Gamma^c_s$  are contractible. 
\end{enumerate}
\end{teo}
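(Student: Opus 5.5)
Everything will be transported through the diffeomorphism $\gamma:\b(\h)\to\Gamma^b$, $\gamma(T)=P_T$, of Proposition \ref{prop 13}. Under $\gamma$, the sets $\Gamma^b_s$, $\Gamma^c$ and $\Gamma^c_s$ are the images of $\b_s(\h)=\{T: T^*=T\}$, of the compact operators $\k(\h)$, and of the selfadjoint compact operators $\k_s(\h)$. Each of these is a closed real-linear subspace of $\b(\h)$. A closed linear subspace is a submanifold of $\b(\h)$, covered by a single chart. So each statement reduces to a linear-algebraic fact about these subspaces together with how the image of a closed set sits inside $\Gamma^b$.

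\textbf{Item 1.} The subspace $\b_s(\h)$ is complemented in $\b(\h)$, as a real Banach space, by the skew-adjoint operators. The projection is $T\mapsto \frac12(T+T^*)$. Hence $\gamma(\b_s(\h))=\Gamma^b_s$ is a complemented $C^\infty$ submanifold of $\Gamma^b$. It is closed in $\Gamma^b$ because $\gamma$ is a homeomorphism onto $\Gamma^b$. Since $\Gamma^b$ is an open subset of the complemented submanifold $\p(\h\times\h)$ (Theorem \ref{42} and Remark \ref{21}), the submanifold structure passes to $\b(\h\times\h)$. Concretely, the tangent space is $d\gamma_T(\b_s(\h))$, and a complement can be written as the image of the skew-adjoint operators plus a complement of $T_{P_T}\p$.

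\textbf{Item 2.} The same argument shows that $\Gamma^c$ and $\Gamma^c_s$ are closed submanifolds of $\Gamma^b$. The closed subspaces are $\k(\h)$ and $\k_s(\h)$, and $\Gamma^c_s\subset\Gamma^c$. For non-complementation I will invoke the classical theorem of Phillips--Sobczyk type: $\k(\h)$ is not complemented in $\b(\h)$ when $\h$ is infinite dimensional. The argument restricts to diagonal operators, which gives $c_0$ inside $\ell^\infty$; composing with the conditional expectation onto the diagonal would then yield a projection $\ell^\infty\to c_0$, which does not exist. The selfadjoint version works the same way with real diagonals.

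A submanifold is complemented at a point iff its tangent space is complemented. The tangent space of $\gamma(\k)$ at $P_T$ is $d\gamma_T(\k(\h))$, and $d\gamma_T$ is an isomorphism onto the tangent space of $\Gamma^b$. So a complement in $\b(\h\times\h)$ would pull back to a complement of $\k(\h)$ in $\b(\h)$. One composes with the projection onto $T_{P_T}\p$ and with $d\gamma_T^{-1}$. This transfer step is the main technical point I expect to need care: I must verify that the tangent space of $\Gamma^b$ is complemented in $\b(\h\times\h)$, which holds by Remark \ref{21}. The transfer also shows non-complementation relative to $\Gamma^b$.

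\textbf{Item 3.} Each of the four sets is homeomorphic via $\gamma$ to a linear space: $\b(\h)$, $\b_s(\h)$, $\k(\h)$ or $\k_s(\h)$. Linear spaces are contractible via $(t,T)\mapsto tT$. Explicitly, $H(t,P_T)=P_{tT}$ contracts each set to $\Pi_1$. It stays inside each set because $tT$ remains selfadjoint or compact whenever $T$ is.
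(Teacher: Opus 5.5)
Your proposal is correct and follows the paper's proof. Like the paper, you transport everything through the diffeomorphism $\gamma$ of Proposition \ref{prop 13} to the closed real-linear subspaces $\b_s(\h)$, $\k(\h)$ and $\k_s(\h)$ of $\b(\h)$; what you add are the details the paper only asserts, namely non-complementation of $\k(\h)$ via Phillips' theorem on $c_0\subset\ell^\infty$, the transfer through $d\gamma_T$, and the explicit contraction $P_{tT}$. One minor point: for the non-complementation transfer you do not need a complement of $T_{P_T}\p$, since restricting a putative projection onto $d\gamma_T(\k(\h))$ to $T_{P_T}\p$ and conjugating by $d\gamma_T$ already gives a projection of $\b(\h)$ onto $\k(\h)$.
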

\begin{proof}
Using the diffeomorphism $\gamma:\b(\h)\to\Gamma^b$ of Proposition \ref{prop 13}, $\Gamma_s^b=\gamma(\b_s(\h))$, where $\b_s(\h)$ is a closed complemented real linear subspace of $\b(\h)$. Similarly $\Gamma^c=\gamma(\k(\h))$ and $\Gamma_s^c=\gamma(\k_s(\h))$, where $\k(\h)$ and $\k_s(\h)$ are closed non complemented subspaces of $\b(\h)$. Clearly these spaces are contractible.
\end{proof}

Recall the unitary operator $\Omega(f,g)=(-g,f)$, and consider the map
\begin{equation}\label{mapa alfa}
\alpha:\b(\h\times\h)\to\b(\h\times\h), \ \alpha({\bf B})=I+\Omega {\bf B}\Omega
\end{equation}
Note the following properties of the map $\alpha$:
\begin{rem}\label{propiedades alfa}

\noindent

\begin{enumerate}
\item
The map $\alpha$ is affine: it is the identity operator  $I$ plus the (isometric) linear map ${\bf B}\mapsto \Omega {\bf B}\Omega$. Therefore it is analytic.
\item
$\alpha\circ\alpha=I_{\b(\h\times\h)}$:  using that $\Omega^2=-I$,
$$
\alpha(\alpha({\bf B}))=\alpha(I+\Omega {\bf B} \Omega)=I+\Omega(I+\Omega {\bf B} \Omega)\Omega=I+\Omega^2+\Omega^2 {\bf B}\Omega^2={\bf B}.
$$
In particular, $\alpha$ is a diffeomorphism of $\b(\h\times\h)$.
\item
$\alpha$ maps $\p(\h\times\h)$ onto itself. Indeed, for ${\bf P}\in\p(\h\times\h)$,
$$
\alpha({\bf P})=I+\Omega {\bf P} \Omega=\Omega \Omega^* -\Omega {\bf P}\Omega^*=\Omega(I-{\bf P})\Omega^*\in\p(\h\times\h).
$$
\end{enumerate}
\end{rem}
Combining these properties with $\Omega P_{Gr(T)} \Omega^*=P_{Gr(T^*)}^\perp$ in Remark \ref{propiedad omega}.3, we get

\begin{prop}
$$\alpha(P_{Gr(T)})=P_{Gr(T^*)}.$$
\end{prop}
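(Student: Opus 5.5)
The plan is to combine item 3 of Remark \ref{propiedades alfa} with item 3 of Remark \ref{propiedad omega}; no new analysis is needed. Let $T$ be closed and densely defined, and put ${\bf P}=P_{Gr(T)}=P_T$.

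First we rewrite $\alpha({\bf P})$ as a unitary conjugation. Since $\Omega^*=-\Omega$, we have $\Omega{\bf P}\Omega=-\Omega{\bf P}\Omega^*$. Since $\Omega$ is unitary, $I=\Omega\Omega^*$. Hence
$$
\alpha({\bf P})=\Omega\Omega^*-\Omega{\bf P}\Omega^*=\Omega(I-{\bf P})\Omega^*=I-\Omega{\bf P}\Omega^*.
$$
This is exactly the computation in Remark \ref{propiedades alfa}.3.

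Next we insert Remark \ref{propiedad omega}.3, which gives $\Omega P_T\Omega^*=P_{T^*}^\perp=I-P_{T^*}$. Substituting,
$$
\alpha(P_T)=I-(I-P_{T^*})=P_{T^*},
$$
which is the claim.

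The only step with any content is the identity $\Omega(Gr(T))=Gr(T^*)^\perp$, which underlies Remark \ref{propiedad omega}.3. It follows from $Gr(T)^\perp=\{(-T^*f,f): f\in D(T^*)\}$ (Lemma 5.4 of \cite{eder}), which we may assume. It also uses the closedness and dense domain of $T$, so that $T^*$ is a densely defined closed operator and $T^{**}=T$. For a self-contained check, one can verify directly that $\Omega(Gr(T))=\{(-Tf,f): f\in D(T)\}$, and that this is $Gr(T^*)^\perp$ by the same lemma applied to $T^*$, using $T^{**}=T$. Finally, the projection onto $\Omega(\s)$ is $\Omega P_\s\Omega^*$ because $\Omega$ is unitary. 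I expect no real obstacle beyond keeping track of the signs in $\Omega^*=-\Omega$.

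As a byproduct, applying the statement to $T^*$ together with $\alpha\circ\alpha=I_{\b(\h\times\h)}$ (Remark \ref{propiedades alfa}.2) is consistent with $T^{**}=T$. It also shows that $\alpha$ maps $\Gamma$ onto $\Gamma$ and $\Gamma^b$ onto $\Gamma^b$.
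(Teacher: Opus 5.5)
Your proposal is correct and is essentially the paper's own proof. You rewrite $\alpha(P_T)=\Omega(I-P_T)\Omega^*=I-\Omega P_T\Omega^*$ and then substitute $\Omega P_T\Omega^*=P_{T^*}^\perp$ from the properties of $\Omega$, which gives $\alpha(P_T)=P_{T^*}$.
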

\begin{proof}
According to Remark \ref{propiedades alfa}.3
$$
\alpha(P_{Gr(T)})=\Omega(I-P_{Gr(T)})\Omega^*=I-\Omega P_{Gr(T)}\Omega^*=I-P_{Gr(T^*)}^\perp=P_{Gr(T^*)}.
$$
\end{proof}

Therefore $\alpha$ defines a symmetry inside $\p(\h\times\h)$, and also a symmetry in $\Gamma$, $\Gamma^b$ and $\Gamma^c$. The fixed points of $\alpha$, when restricted to theses three submanifolds are, respectively,  $\Gamma_s$, $\Gamma^b_s$ and $\Gamma^c_s$.

The following result shows that the graphs of unbounded closed operators  lie at the frontier of $\Gamma$. 
\begin{prop}
Let $T:D(T)\subset\h\to \h$ be an unbounded closed operator.   Then $P_T\in\partial \Gamma$, i.e., there exist projections $E\in\p(\h\times\h)\setminus\Gamma$ such that $E$ are arbitrarily close to $P_T$.
\end{prop}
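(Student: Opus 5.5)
Since $T$ is unbounded and closed with dense domain, $D(T)\neq\h$. Hence $\Pi_1(Gr(T))\neq\h\times\{0\}$, and by Theorem \ref{42} we have $\|P_T-\Pi_1\|=1$. The plan is to perturb $Gr(T)$ slightly, rotating a tiny piece of it into a position that destroys the graph property. A closed subspace $\s$ is a graph exactly when $\s\cap(\{0\}\times\h)=\{0\}$ and $\Pi_1(\s)$ is dense (then closedness of $T$ and density of the domain follow). So a projection $E$ fails to lie in $\Gamma$ as soon as $R(E)$ contains a nonzero vector $(0,h)$. We will build such $E$ arbitrarily close to $P_T$.

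Unboundedness gives unit vectors $f_n\in D(T)$ with $\|Tf_n\|\to\infty$. Normalize the graph vectors:
$$
\xi_n=\frac{(f_n,Tf_n)}{\sqrt{1+\|Tf_n\|^2}}\in Gr(T), \qquad \|\xi_n\|=1 .
$$
Put $h_n=Tf_n/\|Tf_n\|$ and $\eta_n=(0,h_n)$. Then
$$
\|\xi_n-\eta_n\|\le \frac{1}{\sqrt{1+\|Tf_n\|^2}}+\Big\|\frac{Tf_n}{\sqrt{1+\|Tf_n\|^2}}-\frac{Tf_n}{\|Tf_n\|}\Big\|\to 0 .
$$

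When $\|\xi-\eta\|$ is small, there is a unitary $U$ with $U\xi=\eta$ and $\|U-I\|\le C\|\xi-\eta\|$. One choice is the rotation in the plane spanned by $\xi,\eta$, extended as the identity on the orthogonal complement of that plane; after adjusting a phase, $\|U-I\|$ is of order $\|\xi-\eta\|$. Define
$$
E_n=UP_TU^*,
$$
the projection onto $U(Gr(T))$. Then $\|E_n-P_T\|\le 2\|U-I\|\to 0$, and $R(E_n)$ contains $U\xi_n=\eta_n=(0,h_n)\neq0$. Consequently $R(E_n)$ is not the graph of any operator, so $E_n\notin\Gamma$.

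The main point to check is that $U$ can be chosen with norm control in terms of $\|\xi-\eta\|$. This is a standard two-dimensional computation; alternatively it follows from local transitivity (Remark \ref{21}.1) applied to the rank-one projections onto $\xi$ and $\eta$, since their distance is at most $2\|\xi-\eta\|$. In that case we use $UP_\xi U^*=P_\eta$, which gives $U\xi\in\mathbb{C}\eta$, and that is enough. We also obtain $P_T\in\overline{\Gamma}$ trivially, so $P_T\in\partial\Gamma$.
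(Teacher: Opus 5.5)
Your argument is correct, and it takes a genuinely different and more elementary route than the paper's.

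The paper goes through the spectral theorem for $|T|$. It discretizes $|T|$ into a step operator $A_\epsilon=\sum_k\lambda_kP_k$ and shows $\|P_T-P_{VA_\epsilon}\|<2\epsilon$. It then forms $E_N$ by keeping the first $N$ spectral blocks of $Gr(VA_\epsilon)$ and replacing all tail blocks by the ``vertical'' projections $0\oplus VP_kV^*$. This works because the high-spectrum part of the graph is nearly vertical.

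You exploit the same phenomenon with a single vector and a rank-two rotation. The step you single out as the main point is in fact exact here. Since $\langle\xi_n,\eta_n\rangle=\|Tf_n\|/(1+\|Tf_n\|^2)^{1/2}>0$, no phase adjustment is needed. The plane rotation taking $\xi_n$ to $\eta_n$ then satisfies $\|U-I\|=\|\xi_n-\eta_n\|$, so $\|E_n-P_T\|\le 2\|\xi_n-\eta_n\|=O(\|Tf_n\|^{-1})$.

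Your fallback via local transitivity would need a uniform estimate $\|U(P,Q)-I\|\le C\|P-Q\|$, e.g.\ from Kato's explicit formula. Remark~\ref{21}.1 only gives existence and smoothness of $U(P,Q)$. You do not need the fallback, though.

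Your route avoids the spectral theorem and polar decomposition altogether and gives an explicit rate. Your approximants $E_n$ differ from $P_T$ by an operator of rank at most $4$ and lie in the $\u_\infty$-orbit of $P_T$. The same argument also shows that the projection onto any closed subspace $\s\subset\h\times\h$ on which $\Pi_1$ is not bounded below is a norm limit of projections outside $\Gamma$.

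The paper's route, in exchange, yields a by-product of independent interest. It shows that $P_T$ is a norm limit of graphs of operators $VA_\epsilon$ whose modulus is a diagonal step operator. Its non-graph approximants also have a transparent ``truncated graph'' structure.
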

\begin{proof}
Consider the polar decomposition $T=V|T|$.   By the spectral theorem we may suppose that $\h=L^2(M,\mu)$ and $|T|=M_\varphi$, for a non negative measurable (finite) function $\varphi$. The fact that $T$ is unbounded implies that for any $N>0$, the set $\{x\in M: \varphi(x)\ge N\}$ has positive measure. Fix $\epsilon>0$, and denote
$$
\Delta_0=\{x\in M: \varphi(x)=0\}, \hbox{ and } \Delta_j=\{x\in M: \varphi(x)\in((j-1)\epsilon, j\epsilon]\}, \hbox{ for } j\in \mathbb{N}.
$$
Clearly $M=\bigcup_{j\ge 0} \Delta_j$, and the union is disjoint. Consider the step function
$$
\kappa_\epsilon=\sum_{j=1}^\infty \frac{2j-1}{2}\epsilon \chi_{\Delta_j}
$$
(note that $\frac{2j-1}{2}\epsilon$ is the midpoint of the interval $((j-1)\epsilon, j\epsilon]$).
Apparently $|\varphi(x)-\kappa_\epsilon(x)|\le \epsilon$. The functions $f(t)=\frac{1}{1+t^2}$ and $g(t)=\frac{t}{1+t^2}$ satisfy $|f'(t)|, |g'(t)|\le 1$.  Therefore $|f(t)-f(s)|\le |t-s|$ and $|g(t)-g(s)|\le |t-s|$. Thus
 
$$
\|\frac{1}{1+\varphi^2}-\frac{1}{1+\kappa_\epsilon^2}\|_\infty\le\epsilon \ \hbox{ and } \ \|\frac{\varphi}{1+\varphi^2}-\frac{\kappa_\epsilon}{1+\kappa_\epsilon^2}\|_\infty\le\epsilon.
$$
Consider the operator $A_\epsilon=M_{\kappa_\epsilon}$ acting in $L^2(M,\mu)$. Clearly $A_\epsilon$ is positive and block diagonal: $A_\epsilon=\sum_{k=1}^\infty \frac{2k-1}{2}\epsilon P_k$, where $P_k=M_{\chi_{\Delta_k}}\ne 0$ are pairwise orthogonal.
Then we have that 
\begin{itemize}
\item
$\|(I+|T|^2)^{-1}-(I+A_\epsilon^2)^{-1}\|=\|\frac{1}{1+\varphi^2}-\frac{1}{1+\kappa_\epsilon^2}\|_\infty<\epsilon$.
\item
Similarly, $\|(I+|T|^2)^{-1}|T|-(I+A_\epsilon^2)^{-1}A_\epsilon\|=\|\frac{\varphi}{1+\varphi^2}-\frac{\kappa_\epsilon}{1+\kappa_\epsilon^2}\|_\infty<\epsilon$, and therefore
$$
\|(I+|T|^2)^{-1}|T|V^*-(I+A_\epsilon^2)^{-1}A_\epsilon V^*\|\le \|(I+|T|^2)^{-1}|T|-(I+A_\epsilon^2)^{-1}A_\epsilon\|<\epsilon
$$
and 
$$
\|V(I+|T|^2)^{-1}|T|-V(I+A_\epsilon^2)^{-1}A_\epsilon\|=\|V|T|(I+|T|^2)^{-1}-VA_\epsilon(I+A_\epsilon^2)^{-1}\|
$$
$$
\le \|(I+|T|^2)^{-1}|T|-(I+A_\epsilon^2)^{-1}A_\epsilon\|<\epsilon.
$$
\item
Also, since $|T|(I+|T|^2)^{-1}|T|=I-(I+|T|^2)^{-1}$,
$$
\|V|T|(I+|T|^2)^{-1}|T|V^*-VA_\epsilon(I+A_\epsilon^2)^{-1}A_\epsilon V^*\|=\|V\left(|T|(I+|T|^2)^{-1}|T|-A_\epsilon(I+A_\epsilon^2)^{-1}A_\epsilon\right) V^*\|
$$
$$
\le\||T|(I+|T|^2)^{-1}|T|-A_\epsilon(I+A_\epsilon^2)^{-1}A_\epsilon\|=\|(I+|T|^2)^{-1}-(I+A_\epsilon^2)^{-1}\|<\epsilon.
$$
\end{itemize}
Denote, for brevity, $\lambda_k=\frac{2k-1}{2}\epsilon$, and put
$$
P_\epsilon=\left(\begin{array}{cc} (I+A_\epsilon^2)^{-1} & (I+A_\epsilon^2)^{-1}A_\epsilon V^* \\ VA_\epsilon(I+A_\epsilon^2)^{-1} & VA_\epsilon(I+A_\epsilon^2)^{-1}A_\epsilon V^* \end{array}\right)=\sum_{k=1}^\infty\left(\begin{array}{cc} \frac{1}{1+\lambda_k^2}P_k & \frac{\lambda_k}{1+\lambda_k^2}P_kV^* \\ \frac{\lambda_k}{1+\lambda_k^2}VP_k & \frac{\lambda_k^2}{1+\lambda_k^2}VP_kV^* \end{array}\right).
$$
Write $B_\epsilon:=VA_\epsilon$. This product is the  polar decomposition of $B_\epsilon$. Indeed, 
$$
N(A_\epsilon)=\{f\in L^2(M,\mu): f \hbox{ is supported in } \Delta_0\}=N(|T|).
$$
Therefore $\overline{R(A_\epsilon)}=\overline{R(|T|)}$, and then  $V:N(A_\epsilon)\to\overline{R(A_\epsilon)}$ is the partial isometry in the polar decomposition of $A_\epsilon$ as well.

It follows, by formula (\ref{P Gr(T)}),  that  $P_\epsilon$ is the projection onto the graph of $B_\epsilon$. 
Then, using the elementary estimation for block matrices
\begin{equation}\label{pedorrin}
\|\left(\begin{array}{cc} A & B \\ B^* & C \end{array}\right)\|\le \|\left(\begin{array}{cc} A & 0 \\ 0 & C \end{array}\right)\|+\|\left(\begin{array}{cc} 0 & B \\ B^* & 0 \end{array}\right)\|=\max\{\|A\|,\|C\|\}+\|B\|,
\end{equation}
we have that 
$$
\|P_T-P_\epsilon\|<2\epsilon.
$$
Let us show that $P_{\epsilon}$ can be approximated by projections that do not belong to $\Gamma$. 
Fix an integer $N\ge 1$, and put 
$$
E_N=\sum_{k=1}^N\left(\begin{array}{cc} \frac{1}{1+\lambda_k^2}P_k & \frac{\lambda_k}{1+\lambda_k^2}P_kV^* \\ \frac{\lambda_k}{1+\lambda_k^2}VP_k & \frac{\lambda_k^2}{1+\lambda_k^2}VP_kV^* \end{array}\right)+ \sum_{k=N+1}^\infty\left(\begin{array}{cc}  0 & 0 \\ 0 &  VP_kV^*\end{array}\right).
$$
Both matrices are orthogonal projections, the left hand matrix is the series that gives $P_\epsilon$ truncated at $N$. Moreover, their product is clearly zero. It follows that $E_N$ is an orthogonal projection as well.
Let us prove that  $\|P_\epsilon-E_N\|\to 0$ as $N\to \infty$, estimating the norms  of the entries of this difference, and using the elementary inequality (\ref{pedorrin}). 
The $1,1$  entry 
$\sum_{k=N+1}^\infty \frac{1}{1+\lambda_k^2}P_k$ has norm bounded by 
$$
\|\sum_{k=N+1}^\infty \frac{1}{1+\lambda_k^2}P_k\|=\frac{1}{1+\lambda_{N+1}^2}\to 0,
$$
because  the sequence $\lambda_k\nearrow\infty$.
Also, the $1,2$ entry (and thus also the $2,1$ entry) is bounded by
$$
\|\sum_{k=N+1}^\infty \frac{1}{1+\lambda_k^2}P_kV^*\|\le \|\sum_{k=N+1}^\infty \frac{1}{1+\lambda_k^2}P_k\|=\sup_{k\ge N+1}\frac{\lambda_k}{1+\lambda_k^2}\to 0  \ \hbox{ as } \ N\to \infty.
$$
The $2,2$ entry is
$$
\sum_{k=N+1}^\infty \frac{\lambda_k^2}{1+\lambda_k^2}VP_kV^*-\sum_{k=N+1}^\infty VP_kV^*=\sum_{k=N+1}^\infty \left(\frac{\lambda_k^2}{1+\lambda_k^2}-1\right)VP_kV^*=-\sum_{k=N+1}^\infty \frac{1}{1+\lambda_k^2}VP_kV^*
$$
with norm equal to $\frac{1}{1+\lambda_{N+1}^2}\to 0$.

Moreover,  $E_N\notin \Gamma$. Indeed, pick $f\in R(VP_mV^*)$ for $m\ge N+1$. Then
$$
E_N\left(\begin{array}{c} 0 \\ f \end{array}\right)=\left(\begin{array}{c} 0 \\ f \end{array}\right).
$$
The fact that $T$ is unbounded, means that, as mentioned at the beginning of the proof,  the set $\{x\in M: \varphi(x)\ge N\}$ has positive measure, for arbitrarily large $N$, i.e., there are non trivial $VP_mV^*$  for arbitrarily large $m$.
 
Summarizing, we can approximate $P_T$ with projections $E_N$ which lie outside of $\Gamma$.
\end{proof}

\section{Geodesics between graphs}
The following result was proved in \cite{eder} (Theorem 5.7):
\begin{teo} \label{teo eder}
Let $T:D(T)\subset\h\to \h$ be a densely defined closed operator. Then there exists a unique minimal  geodesic $\delta$ between $\delta(0)=\Pi_1=P_{Gr(0)}$ and $\delta(1)=P_{Gr(T)}$. It is given by
$$
\delta(t)=e^{itZ}\Pi_1 e^{-itZ}, \hbox{ for } Z=\left(\begin{array}{cc} 0 & i \arctan(|T|)V^* \\ -iV\arctan(|T|) & 0 \end{array} \right),
$$
where $T=V|T|$ is the polar decomposition.

Moreover, the intermediate projections $\delta(t)$, $t\in(0,1)$ are the  graphs of bounded operators: $\delta(t)=P_{Gr(B(t))}$, where $B(t)\in\b(\h)$.
\end{teo}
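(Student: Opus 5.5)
The plan is to reduce the general case to $T=|T|\ge 0$ via Remark \ref{remark 21}, and then exploit the fact that $\Pi_1$ and $P_{|T|}$ are functions of a single selfadjoint operator, so everything can be computed inside the spectral representation.

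\textbf{Existence and uniqueness.} By Remark \ref{21}.6, a geodesic from $\Pi_1$ to $P_T$ exists, and is unique, provided
$$
R(\Pi_1)\cap N(P_T)=\{0\}\quad\hbox{and}\quad N(\Pi_1)\cap R(P_T)=\{0\}.
$$
Both intersections are in fact trivial:
\begin{itemize}
\item $N(P_T)=Gr(T)^\perp=\{(-T^*f,f)\}$, and an element $(-T^*f,f)$ lies in $\h\times\{0\}$ only when $f=0$.
\item $N(\Pi_1)=\{0\}\times\h$, and $(f,Tf)$ lies in it only when $f=0$.
\end{itemize}
Remark \ref{21}.6 also guarantees that this geodesic is minimal. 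It therefore only remains to verify that the displayed curve $\delta$ is a geodesic with the correct endpoints. Uniqueness then identifies it with the geodesic above.

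\textbf{Verifying $\delta$.} First, $Z$ is selfadjoint: the $(1,2)$ entry of $Z^*$ is the adjoint of the $(2,1)$ entry of $Z$, namely $(-iV\arctan|T|)^*=i\arctan(|T|)V^*$. Second, $Z$ is codiagonal with respect to $\Pi_1$, so by Remark \ref{21}.4 the curve $\delta(t)=e^{itZ}\Pi_1e^{-itZ}$ is a geodesic starting at $\Pi_1$.

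To compute $\delta(1)$, write ${\bf V}=\mathrm{diag}(I,V)$. Then $Z={\bf V}Z_0{\bf V}^*$, where
$$
Z_0=\begin{pmatrix}0& i\arctan|T|\\ -i\arctan|T|&0\end{pmatrix},
$$
and $Z_0$ is supported in $\h\times N(T)^\perp$ (the initial space of ${\bf V}$) off the set $\{|T|=0\}$. On the complement, where $\arctan|T|=0$, nothing moves. Working in the spectral representation $|T|=M_\varphi$, the pointwise $2\times2$ matrix $\theta\begin{pmatrix}0&i\\-i&0\end{pmatrix}$ with $\theta=\arctan\varphi$ has exponential
$$
e^{itZ_0}=\begin{pmatrix}\cos t\theta&-\sin t\theta\\ \sin t\theta&\cos t\theta\end{pmatrix},
$$
a pointwise rotation. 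It follows that
$$
e^{itZ_0}\Pi_1e^{-itZ_0}=\begin{pmatrix}\cos^2 t\theta&\cos t\theta\,\sin t\theta\\ \cos t\theta\,\sin t\theta&\sin^2 t\theta\end{pmatrix}.
$$
At $t=1$, using $\cos^2\theta=1/(1+\varphi^2)$ and $\sin\theta\cos\theta=\varphi/(1+\varphi^2)$, this matrix is exactly $P_{|T|}$ by (\ref{P Gr(A)}).

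Conjugating by ${\bf V}$ and applying Remark \ref{remark 21} gives $\delta(1)=P_T$. The one point needing care is that ${\bf V}$ is only a partial isometry, so I would check directly that $e^{itZ}={\bf V}e^{itZ_0}{\bf V}^*+(I-{\bf V}{\bf V}^*)$. This holds because $Z$ vanishes on the complement of the final space of ${\bf V}$. The same identity also takes care of $\Pi_1$ on $\h\times N(T)$, where nothing moves.

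\textbf{Intermediate points.} For $t\in(0,1)$ we have $t\theta\le t\pi/2<\pi/2$, so $\cos t\theta\ge\cos(t\pi/2)>0$ is bounded away from zero. The pointwise rotated line is therefore the graph of multiplication by $\tan(t\theta)$, which is a bounded function. Hence $\delta(t)=P_{Gr(B(t))}$ with
$$
B(t)=V\tan\bigl(t\arctan|T|\bigr)\in\b(\h).
$$
Alternatively, one can note that $\|\delta(t)-\Pi_1\|=\sup\sin(t\theta)<1$ and invoke Theorem \ref{42}.

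I expect the main obstacle to be the bookkeeping with the partial isometry $V$ when $N(T)\neq 0$, namely making the reduction from $T$ to $|T|$ rigorous at the level of exponentials. Once that is done, everything else is a scalar $2\times2$ rotation computation.
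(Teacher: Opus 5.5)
The paper does not prove this statement itself. It is quoted from \cite{eder} (Theorem 5.7), so there is no in-paper argument to compare with, and your proposal has to stand as a self-contained proof.

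It does stand. Existence and uniqueness follow from Remark \ref{21}.6, because both $(\h\times\{0\})\cap Gr(T)^\perp$ and $(\{0\}\times\h)\cap Gr(T)$ are trivial. The step you flagged as delicate is also fine. Since $\arctan(|T|)$ vanishes on $N(T)$, you have $Z_0{\bf V}^*{\bf V}={\bf V}^*{\bf V}Z_0=Z_0$. Hence $Z^n={\bf V}Z_0^n{\bf V}^*$ for $n\ge 1$, and $e^{itZ}={\bf V}e^{itZ_0}{\bf V}^*+(I-{\bf V}{\bf V}^*)$. Combined with $(I-{\bf V}{\bf V}^*)\Pi_1=0$ and ${\bf V}^*\Pi_1{\bf V}=\Pi_1$, this gives $\delta(t)={\bf V}e^{itZ_0}\Pi_1e^{-itZ_0}{\bf V}^*$ for every $t$. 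The rest is the pointwise rotation computation, which is correct.

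Compared with simply citing \cite{eder}, your route produces the explicit formula $B(t)=V\tan(t\arctan|T|)$. Your alternative argument also ties the last assertion to this paper's Theorem \ref{42}, via $\|\delta(t)-\Pi_1\|\le\sin(t\pi/2)<1$.

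Two details should be written out.

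\textbf{Norm of the exponent.} The uniqueness in Remark \ref{21}.6 only makes sense among minimal geodesics, i.e. those whose codiagonal exponent has norm at most $\pi/2$. Already for $\h=\mathbb{C}$, replacing the rotation angle $\theta$ by $\theta+\pi$ gives another geodesic from $\Pi_1$ with the same endpoint. So before identifying your $\delta$ with the geodesic of Remark \ref{21}.6, you must record that $\|Z\|=\|\arctan(|T|)V^*\|\le\pi/2$. By Remark \ref{21}.5, this also shows directly that $\delta$ is minimal on $[0,1]$.

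\textbf{Polar decomposition of $B(t)$.} To pass from $P_{C(t)}$, with $C(t)=\tan(t\arctan|T|)$, to ${\bf V}P_{C(t)}{\bf V}^*=P_{VC(t)}$ through Remark \ref{remark 21}, observe that $N(C(t))=N(|T|)$ for $t\in(0,1)$. Therefore $|VC(t)|=C(t)$, and $V$ is the partial isometry in the polar decomposition of $VC(t)$.
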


\begin{Questions}\label{cuestiones}
We shall next consider the assertions of  Theorem \ref{teo eder}  for arbitrary  pairs of closed operators $S$ and $T$:
\begin{itemize}
\item
Does there exist a minimal geodesic of $\p(\h\times\h)$ joining $Gr(S)$ and $Gr(T)$?
\item
If the answer is positive, can it be chosen as a curve of graphs (i.e., a curve inside $\Gamma$)?
\end{itemize}
\end{Questions}
Concerning the first question, any pair of graphs $Gr(A)$ and $Gr(B)$ of selfadjoint operators $A, B$ can be joined by a geodesic of $\p(\h\times\h)$:
\begin{teo}\label{teo 42}
Let $A:D(A)\subset\h\to\h$ and $B:D(B)\subset\h\to\h$ be selfadjoint operators. Then there exists a minimal geodesic $\delta$ of $\p(\h\times\h)$ such that $\delta(0)=P_{Gr(A)}$ and $\delta(1)=P_{Gr(B)}$. 
\end{teo}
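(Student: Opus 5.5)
By Remark \ref{21}.6 (the criterion from \cite{p-q}), it suffices to prove that
$$
\dim Gr(A)\cap Gr(B)^\perp=\dim Gr(A)^\perp\cap Gr(B).
$$
When this holds, the geodesic it provides is automatically minimal. So the plan is to construct an explicit isometric (indeed unitary) correspondence between these two intersections, using the operator $\Omega$ and the selfadjointness of $A$ and $B$.

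The key tool is Remark \ref{propiedad omega}.2. For selfadjoint $A$ and $B$ it gives $\Omega(Gr(A))=Gr(A)^\perp$ and $\Omega(Gr(B))=Gr(B)^\perp$. Since $\Omega$ is unitary with $\Omega^2=-I$, applying $\Omega$ to the second identity also gives $\Omega(Gr(B)^\perp)=\Omega^2(Gr(B))=Gr(B)$. Because $\Omega$ is a bijection, it preserves intersections, so
$$
\Omega\big(Gr(A)\cap Gr(B)^\perp\big)=\Omega(Gr(A))\cap\Omega(Gr(B)^\perp)=Gr(A)^\perp\cap Gr(B).
$$
A unitary maps a closed subspace isometrically onto its image, so the two subspaces have the same Hilbert dimension, which is the required equality.

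For completeness I would also record, without needing it, a concrete description of these intersections. A vector $(f,Af)\in Gr(A)$ lies in $Gr(B)^\perp=\{(-Bg,g)\}$ exactly when $f=-Bg$ and $Af=g$ for some $g$. This forces $f\in D(BA)$ with $BAf=-f$, so the intersection is isomorphic to $N(BA+I)$; similarly the other intersection is isomorphic to $N(AB+I)$. The equality of dimensions is then the familiar symmetry $f\mapsto Af$ between these kernels. The $\Omega$ argument avoids all domain bookkeeping, however, so I would present only that.

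There is no real obstacle here. The only point needing care is that the geodesic criterion in Remark \ref{21}.6 applies to arbitrary projections in $\p(\h\times\h)$, with no assumption that they lie in the same component. In any case both projections lie in $\p_\infty$ by the earlier remark on $\Gamma$, so the criterion yields a minimal geodesic $\delta(t)=e^{itX}P_{Gr(A)}e^{-itX}$ with $X^*=X$ codiagonal with respect to $P_{Gr(A)}$ and $\delta(1)=P_{Gr(B)}$. If one also wants the explicit exponent, that requires the five-space decomposition of the pair, which I would postpone to the subsequent analysis; existence needs only the dimension count.
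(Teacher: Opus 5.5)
Your proposal is correct and is essentially the paper's proof: the paper also reduces to the dimension criterion of Remark \ref{21}.6 and concludes from $\Omega(Gr(A)\cap Gr(B)^\perp)=Gr(A)^\perp\cap Gr(B)$, which comes from Remark \ref{propiedad omega}.2. Your derivation of $\Omega(Gr(B)^\perp)=Gr(B)$ from $\Omega^2=-I$ simply spells out a step the paper leaves implicit.
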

\begin{proof}
We  need to check that $\dim Gr(A)\cap Gr(B)^\perp=\dim Gr(A)^\perp\cap Gr(B)$. But this is clear from Remark \ref{propiedad omega}.2: $\Omega(Gr(A)\cap Gr(B)^\perp)=Gr(A)^\perp\cap Gr(B)$.
\end{proof}

For bounded operators we have the following criterium  (Proposition 5.5 in \cite{eder}):

\begin{lem}
Suppose that $S$ and $T$ are bounded. There exists a (minimal) geodesic between $P_{Gr(S)}$ and $P_{Gr(T)}$ for $S,T\in\b(\h)$ if and only if $\dim N(I+S^*T)=\dim N(I+T^*S)$.
\end{lem}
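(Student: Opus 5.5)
By Remark \ref{21}.6, a (minimal) geodesic joining $P_{Gr(S)}$ and $P_{Gr(T)}$ exists if and only if
$$
\dim Gr(S)\cap Gr(T)^\perp=\dim Gr(S)^\perp\cap Gr(T).
$$
The plan is therefore to compute both intersections explicitly, using the description of orthogonal complements of graphs in Remark \ref{propiedad omega}, and to identify each one linearly with a nullspace.

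For the first intersection, recall that $Gr(T)^\perp=\{(-T^*g,g): g\in\h\}$, where $T^*$ is bounded and everywhere defined. A vector lies in $Gr(S)\cap Gr(T)^\perp$ exactly when it equals both $(f,Sf)$ and $(-T^*g,g)$. This forces $g=Sf$ and $f=-T^*Sf$, that is, $f\in N(I+T^*S)$. Conversely, every $f\in N(I+T^*S)$ produces the vector $(f,Sf)$ in the intersection.

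The map $f\mapsto (f,Sf)$ is linear and injective (its first coordinate is $f$). Hence it is a linear bijection from $N(I+T^*S)$ onto $Gr(S)\cap Gr(T)^\perp$. Moreover, it is bounded below and bounded, since $\|f\|\le\|(f,Sf)\|\le(1+\|S\|)\|f\|$. So it is an isomorphism of closed subspaces, and the two dimensions agree, including in the infinite-dimensional case.

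The second intersection is handled symmetrically. A vector lies in $Gr(S)^\perp\cap Gr(T)$ exactly when it equals both $(-S^*h,h)$ and $(f,Tf)$. This gives $h=Tf$ and $f=-S^*Tf$, so $f\in N(I+S^*T)$. The same bijection argument then shows
$$
\dim Gr(S)^\perp\cap Gr(T)=\dim N(I+S^*T).
$$
Substituting both identities into the criterion of Remark \ref{21}.6 gives exactly the stated condition $\dim N(I+S^*T)=\dim N(I+T^*S)$.

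Minimality comes for free, since Remark \ref{21}.6 already asserts that any geodesic joining two projections is minimal.

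I see no real obstacle here. The only point needing care is that "dimension" should be read as Hilbert dimension when the intersections are infinite-dimensional. That is why the step above checks that the correspondence $f\mapsto (f,Sf)$ is a topological linear isomorphism rather than merely a linear bijection.
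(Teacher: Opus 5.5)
Your proof is correct and follows the intended route. The paper gives no proof of its own and cites \cite{eder} for this lemma. The natural argument is the one you give, the same strategy the paper uses for Theorem \ref{teo 42}: apply the criterion of Remark \ref{21}.6 after identifying $Gr(S)\cap Gr(T)^\perp$ with $N(I+T^*S)$ and $Gr(S)^\perp\cap Gr(T)$ with $N(I+S^*T)$, via $f\mapsto(f,Sf)$ and $f\mapsto(f,Tf)$. Your check that these maps are topological isomorphisms, so that Hilbert dimensions agree and not just algebraic ones, is a correct and worthwhile detail.
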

\begin{coro}
If both $S$ and $T$ are bounded, and one of them is compact, then there exists a geodesic between $Gr(S)$ and $Gr(T)$.
\end{coro}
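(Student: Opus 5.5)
The corollary follows from the preceding Lemma once we check that $\dim N(I+S^*T)=\dim N(I+T^*S)$ whenever $S,T\in\b(\h)$ and one of them is compact. The plan is to show that both operators $I+S^*T$ and $I+T^*S$ are Fredholm of index zero and adjoint to each other. The dimension equality is then a formal consequence.

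First, suppose without loss of generality that $K$ denotes whichever of $S,T$ is compact. Since the ideal of compact operators is two-sided and closed under adjoints, both $S^*T$ and $T^*S=(S^*T)^*$ are compact. By the Riesz--Schauder (Fredholm alternative) theory, $I+C$ with $C$ compact is Fredholm of index $0$. Hence
$$
\dim N(I+S^*T)=\dim N\big((I+S^*T)^*\big)=\dim R(I+S^*T)^\perp .
$$

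Second, observe that $(I+S^*T)^*=I+T^*S$. The previous display therefore reads exactly $\dim N(I+S^*T)=\dim N(I+T^*S)$. Note that both dimensions are finite here, so no cardinality subtleties arise. The Lemma then yields a minimal geodesic between $P_{Gr(S)}$ and $P_{Gr(T)}$.

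There is no real obstacle. The only point needing care is to remember that index zero for $I+C$ gives equality of the kernel dimension of $I+C$ with that of its adjoint, and not merely finiteness. Alternatively, one could argue directly via the Fredholm alternative: the nonzero eigenvalue $-1$ of $S^*T$ and the eigenvalue $-1$ of its adjoint have the same (finite) geometric multiplicity.
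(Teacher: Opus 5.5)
Your proof is correct and is essentially the paper's argument. Compactness of $S^*T$ and the Fredholm alternative give $\dim N(I+S^*T)=\dim N((I+S^*T)^*)=\dim N(I+T^*S)$, and the preceding Lemma then supplies the geodesic.
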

\begin{proof}
Since $S^*T$ is compact, by the Fredholm alternative 
$$
\dim N(I+S^*T)=\dim N((I+S^*T)^*)=\dim N(I+T^*S).
$$
\end{proof}
On the negative side,we have he following easy example (\cite{eder}, Example 5.6):
\begin{ejem}
 Denote by ${\bf S}$ the unilateral shift in $\ell^2(\mathbb{N})$. Using the above Lemma it is easy to see that the graphs of $-2{\bf S}$ and $T=I$ cannot be joined by a geodesic. 
\end{ejem}

\section{Geodesics of graphs of selfadjoint operators}

We shall focus now in the second question in \ref{cuestiones}, for selfadjoint operators $A$ and $B$.
First,  we examine more closely the construction of the exponents of the geodesics  joining $Gr(A)$ and $Gr(B)$, among which we will distinguish a special exponent $X_{A,B}$ (depending on $A$, $B$ and $\Omega$).  It is based in the 5 space (orthogonal) decomposition of $\h\times\h$ in the presence of two subspaces (see \cite{dixmier} or \cite{davis}, also \cite{halmos}), in this case $Gr(A)$ and $Gr(B)$:
$$
\h\times\h=Gr(A)\cap Gr(B)\  \oplus\  Gr(A)^\perp\cap Gr(B)^\perp\  \oplus\ Gr(A)\cap Gr(B)^\perp\ \oplus\ Gr(A)^\perp\cap Gr(B)\  \oplus\  \h_g,
$$
where $\h_g$ is  the  (so  called) {\it generic part} of $Gr(A)$ and $Gr(B)$. This decomposition reduces both $P_A$ and $P_B$.
\begin{itemize}
\item
On $Gr(A)\cap Gr(B)\  \oplus\  Gr(A)^\perp\cap Gr(B)^\perp$, $P_A$ and $P_B$ coincide: they are both the identity, or both trivial. Therefore one chooses here  $X_{A,B}=0$.
\item In $\h_g$ there is a unique possible choice, which we denote $X'_{A,B}$: the reductions $P_A\big|_{\h_g}$ and $P_B\big|_{\h_g}$ satisfy the condition given in Remark \ref{21}.6, establishing that they can be joined by a unique geodesic.  
\item In $Gr(A)\cap Gr(B)^\perp\ \oplus\ Gr(A)^\perp\cap Gr(B)$, we must pick an isometric isomorphism $W:Gr(A)^\perp\cap Gr(B)\to Gr(A)\cap Gr(B)^\perp$. The exponent in this part is $i\frac{\pi}{2}(-W^*\oplus W)$.  We choose $W=\Omega\big|_{Gr(A)^\perp\cap Gr(B)}$ (recall that   $\Omega(Gr(A)\cap Gr(B)^\perp)=Gr(A)^\perp\cap Gr(B)$). Then $W^*=-\Omega\big|_{Gr(A)\cap Gr(B)^\perp}$, and thus $i\frac{\pi}{2}(-W^*\oplus W)=i\frac{\pi}{2}\Omega$ in this part.
\end{itemize}

Summarizing, we  choose this distinguished exponent just constructed:
\begin{equation}\label{X_A,B}
X_{A,B}=0  \ \oplus \ -i\Omega  \ \oplus X'_{A,B},
\end{equation}
in the three space decomposition 
$$
\Big(Gr(A)\cap Gr(B)\  \oplus\  Gr(A)^\perp\cap Gr(B)^\perp\Big) \oplus \Big(Gr(A)\cap Gr(B)^\perp\ \oplus\ Gr(A)^\perp\cap Gr(B)\Big)\oplus\h_g.
$$
 \begin{lem}
$X_{A,B}$ commutes with $\Omega$
\end{lem}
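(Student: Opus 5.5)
The plan is to show that $\Omega$ is reduced by the three-space decomposition used to define $X_{A,B}$, and then to check commutation on each block. The only input about $\Omega$ needed is Remark \ref{propiedad omega}.2. Since $A,B$ are selfadjoint, $\Omega(Gr(A))=Gr(A)^\perp$. Because $\Omega^2=-I$, this also gives $\Omega(Gr(A)^\perp)=Gr(A)$, and likewise for $B$. In terms of projections,
\[
\Omega P_A\Omega^*=I-P_A=\Omega^*P_A\Omega ,\qquad \Omega P_B\Omega^*=I-P_B=\Omega^*P_B\Omega .
\]

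First I would track how $\Omega$ acts on the five spaces. It maps $Gr(A)\cap Gr(B)$ onto $Gr(A)^\perp\cap Gr(B)^\perp$ and back, so it leaves their sum invariant. It interchanges $Gr(A)\cap Gr(B)^\perp$ and $Gr(A)^\perp\cap Gr(B)$, so it leaves their sum invariant as well. Since $\Omega$ is unitary and leaves the direct sum of the first four spaces invariant, it also leaves the orthogonal complement $\h_g$ invariant. Hence $\Omega$ is diagonal in the three-space decomposition, and it suffices to check commutation block by block.

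On the first block $X_{A,B}=0$, so commutation is trivial. On the second block $X_{A,B}=-i\Omega$ restricted to that block, so it commutes with the restriction of $\Omega$. The only real work is on $\h_g$, where $X'=X'_{A,B}$ is the exponent of the unique geodesic joining the reductions of $P_A$ and $P_B$. That is, $X'$ is selfadjoint, codiagonal with respect to $P_A$, satisfies $\|X'\|\le\pi/2$, and
\[
e^{iX'}P_Ae^{-iX'}=P_B \quad\hbox{on } \h_g .
\]
I would show that $Y:=\Omega X'\Omega^*$ (restricted to $\h_g$) has the same three properties.
\begin{itemize}
\item $Y$ is selfadjoint, and $\|Y\|=\|X'\|$.
\item $Y$ is codiagonal with respect to $\Omega P_A\Omega^*=I-P_A$, which is the same as being codiagonal with respect to $P_A$.
\item Using $\Omega^*P_A\Omega=I-P_A$ and $\Omega(I-P_B)\Omega^*=P_B$, we get
\[
e^{iY}P_Ae^{-iY}=\Omega e^{iX'}(I-P_A)e^{-iX'}\Omega^*=\Omega(I-P_B)\Omega^*=P_B .
\]
\end{itemize}
By the uniqueness in Remark \ref{21}.6 (the intersections are trivial on $\h_g$), the geodesics $t\mapsto e^{itY}P_Ae^{-itY}$ and $t\mapsto e^{itX'}P_Ae^{-itX'}$ coincide. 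Therefore $Y=X'$, i.e. $\Omega X'=X'\Omega$.

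The main obstacle is the last step: passing from uniqueness of the geodesic to uniqueness of its codiagonal exponent. Two geodesics $e^{itX}Pe^{-itX}$ with codiagonal $X$ that coincide have equal velocity at $t=0$, namely $i[X,P]$. For $X$ codiagonal with respect to $P$, the exponent is recovered from this velocity via $X=-i[\,i[X,P],P]$ up to sign conventions. So equal curves force $X=Y$. I would verify this identity carefully, and if needed use the explicit form of $X'$ on the generic part (from the Halmos two-subspace model) as a fallback.
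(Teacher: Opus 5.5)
Your proposal is correct and follows the paper's proof. $\Omega$ reduces the three-space decomposition, commutation is immediate on the first two blocks, and on $\h_g$ the conjugate $\Omega X'_{A,B}\Omega^*$ is again a selfadjoint, $P'_A$-codiagonal exponent of norm at most $\pi/2$ carrying $P'_A$ to $P'_B$, so uniqueness forces it to equal $X'_{A,B}$. Your extra step recovering the exponent from the initial velocity fills in a point the paper leaves implicit, and the identity holds exactly with no sign issue: for $P$-codiagonal $X$ one has $[[X,P],P]=X$, so no fallback is needed.
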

\begin{proof}
Again, the fact that $\Omega(Gr(A))=Gr(A)^\perp$, means that $\Omega$ leaves invariant the first two subspaces above, 
$\Big(Gr(A)\cap Gr(B)\  \oplus\  Gr(A)^\perp\cap Gr(B)^\perp\Big)$ and $\Big(Gr(A)\cap Gr(B)^\perp\ \oplus\ Gr(A)^\perp\cap Gr(B)\Big)$. Therefore it leaves invariant also the third $\h_g$. 

Thus it suffices to check that $\Omega$ commutes with $X_{A,B}$ in these three subspaces. This is clear in the first summand, where $X_{A,B}=0$, and in the second, where $X_{A,B}=i\frac{\pi}{2}\Omega$. Let us examine what happens in $\h_g$. We know that $X'_{A,B}$ is the unique selfadjoint operator in $\h_g$, $\|X'_{A,B}\|\le\pi/2$, which is codiagonal with respect to   
the reduction $P'_{A}:=P_{A}\big|_{\h_g}$ of $P_{A}$ to $\h_g$, and satisfies
$$
e^{iX'_{A,B}}P'_{A}e^{-iX'_{A,B}}=P'_{B}.
$$
Then, conjugating with $\Omega$,
$$
\Omega e^{iX'_{A,B}}P'_{A}e^{-iX'_{A,B}}\Omega^*=e^{i\Omega X'_{A,B}\Omega^*}\Omega P'_{A}\Omega^*e^{-i\Omega X'_{A,B}\Omega^*}=\Omega P'_{B}\Omega^*.
$$
Recall from Remark \ref{propiedad omega}.3  that,  since $A$ and $B$ are selfadjoint,  $\Omega P_{A}\Omega^*=I-P_{A}$ and $\Omega P_{B}\Omega^*=I-P_{B}$. Since $\Omega$ reduces to the generic part, we have also
$\Omega P'_{A}\Omega^*=I-P'_{A}$ and $\Omega P'_{B}\Omega^*=I-P'_{B}$. Then
$$
e^{i\Omega X'_{A,B}\Omega^*}(I-P'_{A})e^{-i\Omega X'_{A,B}\Omega^*}= I-P'_{B},
$$
which  implies that $e^{i\Omega X'_{A,B}\Omega^*}P'_{A}e^{-i\Omega X'_{A,B}\Omega^*}=P'_{B}$. Clearly $\|\Omega X'_{A,B}\Omega^*\|=\|X'_{A,B}\|\le\pi/2$ and $\Omega X'_{A,B}\Omega^*$ is selfadjoint. Finally, an operator $X$ is $P$-codiagonal if and only if (for any unitary operator $U$) $UXU^*$ is $UPU^*$-codiagonal. This implies that $\Omega X'_{A,B} \Omega^*$ is $\Omega P'_{A}\Omega^*=(I-P'_{A})$-codiagonal. But clearly, being $(I-P)$-codiagonal is the same as being $P$-codiagonal, thus $\Omega X'_{A,B} \Omega^*$ is $P'_{A}$-codiagonal. Therefore, by the uniqueness of the geodesic in the generic part, it follows that
$$
\Omega X'_{A,B} \Omega^*=X'_{A,B}.
$$
\end{proof}

Note that operators commuting with $\Omega$ are of the form
$$
X=\left(\begin{array}{cc} Y & Z \\ -Z & Y \end{array}\right).
$$

\begin{lem}\label{lema 44}
Let $A:D(A)\subset\h\to\h$ be a selfadjoint operator and $U=\left(\begin{array}{cc} V & W \\ -W & V \end{array}\right)$  a unitary operator in $\h\times\h$  (which commutes with $\Omega$). Then
$$
U(Gr(A)) \hbox{ is a graph } \iff N(V^*+AW^*)=\{0\}.
$$
\end{lem}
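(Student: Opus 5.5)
The plan is to reduce "being a graph" to a single intersection condition and then compute that intersection using $U^*$. Put $\s:=U(Gr(A))$, a closed subspace of $\h\times\h$. A closed subspace $\s$ is the graph of a closed, densely defined operator if and only if two conditions hold:
\begin{enumerate}
\item $\s\cap(\{0\}\times\h)=\{0\}$, so that $\s$ is the graph of a linear operator, which is then automatically closed;
\item $\Pi_1(\s)$ is dense in $\h$, i.e.\ the domain is dense.
\end{enumerate}
Condition 2 is equivalent to $\s^\perp\cap(\h\times\{0\})=\{0\}$. Indeed, $(f,0)\perp\s$ exactly when $f\perp\Pi_1(\s)$.

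First I will show that condition 2 follows from condition 1, using that $U$ commutes with $\Omega$. By Remark \ref{propiedad omega}.2, $Gr(A)^\perp=\Omega(Gr(A))$, so
$$
\s^\perp=U(Gr(A)^\perp)=U\Omega(Gr(A))=\Omega U(Gr(A))=\Omega(\s).
$$
Since $\Omega(\{0\}\times\h)=\h\times\{0\}$, this gives
$$
\s^\perp\cap(\h\times\{0\})=\Omega\big(\s\cap(\{0\}\times\h)\big).
$$
Hence the two conditions coincide, and $\s$ is a graph if and only if $\s\cap(\{0\}\times\h)=\{0\}$.

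Next I will compute this intersection. We have $(0,g)\in\s$ iff $U^*(0,g)\in Gr(A)$. Since
$U^*=\left(\begin{array}{cc} V^* & -W^* \\ W^* & V^* \end{array}\right)$, we get $U^*(0,g)=(-W^*g,V^*g)$. This vector lies in $Gr(A)$ iff $W^*g\in D(A)$ and $-AW^*g=V^*g$, that is, iff $g$ lies in the natural domain $\{g: W^*g\in D(A)\}$ of $V^*+AW^*$ and $(V^*+AW^*)g=0$. Therefore
$$
\s\cap(\{0\}\times\h)=\{(0,g): g\in N(V^*+AW^*)\},
$$
and this is trivial exactly when $N(V^*+AW^*)=\{0\}$, which proves the equivalence.

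The only delicate point is the first step: the density of the domain does not need a separate hypothesis, because commutation with $\Omega$ transports the condition on $\s^\perp$ back to the condition on $\s$. I will also state explicitly that $N(V^*+AW^*)$ is taken with respect to the domain $\{g: W^*g\in D(A)\}$.
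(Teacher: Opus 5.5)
Your proof is correct and follows the same route as the paper: compute $U^*(0,g)=(-W^*g,V^*g)$ and observe that this vector lies in $Gr(A)$ exactly when $g\in N(V^*+AW^*)$. Your extra step is a useful addition: using $U(Gr(A))^\perp=\Omega\,U(Gr(A))$, you show that density of the domain is automatic, a point the paper passes over with ``Clearly''.
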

\begin{proof}
Clearly $U(Gr(A))$ is a graph if and only if it does not contain vectors $(0,h)$ with $h\ne 0$. That is, for any $h\ne 0$, the solution of the  equation $U(f,g)=(0,h)$ does not belong to $Gr(A)$. This happens if (for any $h\ne 0$), $g\ne Af$, i.e.,
$-AW^*h\ne V^*h$, or $(V^*+AW^*)h\ne 0$.
\end{proof}
In the case when $A$ is arbitrary and $B=0$ , for $U=e^{i X_{0,A}}$, the condition of the Lemma reads $\{0\}=N(V^*)=N((I+A^2)^{-1/2})$, which clearly holds.

Finally we address the second question in \ref{cuestiones}. The following example shows that even in the simplest situation, the geodesic may wander outside $\Gamma$.

\begin{ejem}\label{ejemplos}

Let us consider now the simplest possible setting: $\h=\mathbb{C}$ and $A,B:\mathbb{C}\to\mathbb{C}$ given by $A(z)=az$, $B(z)=bz$, for $a,b\in\mathbb{R}$. Let us examine the geodesic between the complex lines $Gr(A)=\{(z,az): z\in\mathbb{C}\}$ and $Gr(B)=\{(z,bz): z\in\mathbb{C}\}$. Clearly $Gr(A)\cap Gr(B)=Gr(A)^\perp\cap Gr(B)^\perp=\{0\}$ if $a\ne b$,  and $Gr(A)\cap Gr(B)^\perp=Gr(A)^\perp\cap Gr(B)=\{0\}$ if $1+ab\ne 0$. Assume these conditions (i.e., that $Gr(A)$ and $Gr(B)$ are in generic position). In order to keep the example  more plain, assume also that $-1<a<0$ and $b=-a$. The projections $P_a$ and $P_b$ in $\mathbb{C}^{2\times 2}$ are given by
$$
P_a=\left( \begin{array}{cc} \frac{1}{1+a^2} & \frac{a}{1+a^2} \\ \frac{a}{1+a^2} & \frac{a^2}{1+a^2} \end{array} \right) \ \hbox{ and } \ P_b=\left( \begin{array}{cc} \frac{1}{1+a^2} & \frac{-a}{1+a^2} \\ \frac{-a}{1+a^2} & \frac{a^2}{1+a^2} \end{array} \right).
$$
The unitary operator that diagonalizes $P_a$ is $U_a=\left( \begin{array}{cc} \frac{1}{(1+a^2)^{1/2}} & \frac{-a}{(1+a^2)^{1/2}} \\ \frac{a}{(1+a^2)^{1/2}} & \frac{1}{(1+a^2)^{1/2}} \end{array} \right)$: 
$$
U_a\left(\begin{array}{cc} 1 & 0 \\ 0 & 0 \end{array}\right) U_a^*=P_a.
$$
Note that $U_a$ is a rotation of angle $\theta=\arctan(a)$. 

To compute the exponent of the geodesic $\delta$ that joins $\delta(0)=P_a$ with $\delta(1)=P_b$, we must identify the angle $0<x\le\pi/2$ such that for $c=\cos(x)$, $s=\sin(x)$, we have 
$$
U_a\left(\begin{array}{cc} c^2 & cs \\ cs & s^2 \end{array} \right)U_a^*=P_b, \ \hbox{ i.e., } \ \left(\begin{array}{cc} c^2 & cs \\ cs & s^2 \end{array} \right)=U_a^*P_b U_a.
$$

After elementary computations this gives $c=\frac{1-a^2}{1+a^2}$ and $s=\frac{-2a}{1+a^2}$ which satisfy that $x=\arccos(\frac{1-a^2}{1+a^2})<\pi/2$.
 The exponent of $X_{A,B}$ of the geodesic $\delta$, is obtained from the matrix  
$$
X'=\left(\begin{array}{cc} 0 & -ix \\ ix & 0 \end{array}\right)
$$
in the system of coordinates that diagonalizes $P_b$.  Then  $e^{itX'}=\left(\begin{array}{cc} \cos(tx) & \sin(tx) \\ -\sin(tx) & \cos(tx) \end{array}\right)$. The actual exponent $X_{A,B}$ is $U_aX'U_a^*$, and the unitary $e^{itX_{A,B}}$ that pushes the geodesic $\delta$  is therefore 
$$
e^{itX_{A,B}}=U_ae^{itX'}U_a^*=\left(\begin{array}{cc} \cos(tx) & \sin(tx) \\ -\sin(tx) & \cos(tx) \end{array}\right),
$$
because $U_a$ and $e^{itX'}$ are both rotations.
Therefore, if we apply Lemma \ref{lema 44} in this case, $V=V^*=\cos(tx)$, $W=W^*=\sin(tx)$ and $A=a$, we need to check if the number $\cos(tx)+a\sin(tx)=0$ for some $t\in(0,1)$. This is equivalent to $\tan(tx)=-\frac{1}{a}$ for some $-1<a<0$, where $x=\arccos(\frac{1-a^2}{1+a^2})$. Or $\arctan(-\frac1a)=t\arccos(\frac{1-a^2}{1+a^2})$ for some $t\in(0,1)$, $-1<a<0$. That is, we need to check if there exists  an $a\in(-1,0)$ with $\arctan(-\frac1a)<\arccos(\frac{1-a^2}{1+a^2})$, or equivalently, with 
$$
-\frac1a<\tan\left(\arccos(\frac{1-a^2}{1+a^2})\right)=\frac{\sqrt{1-\left(\frac{1-a^2}{1+a^2}\right)^2}}{\frac{1-a^2}{1+a^2}} 
=-\frac{2a}{1-a^2},
$$
which happens choosing $-1<a<-1/\sqrt{3})$.

\end{ejem}

\section{Graphs and the restricted Grassmannian}

Le us recall the definition of   the so-called {\it restricted Grassmannian} \cite{sato}, \cite{segalwilson}:
\begin{defi}
Let $P$ be an orthogonal projection in $\j=\j_+\oplus\j_-$. Denote $P_+=P_{\j_+}$ and $P_-=P_{\j_-}$. Then $P$ belongs to the restricted Grassmannian $\p_{res}(\j_+)$  (induced by this decomposition) if and only if
\begin{itemize}
\item
$P_+P\big|_{R(P)}:R(P)\to \j_+ $ is a Fredholm operator;
\item
$P_-P\big|_{R(P)}:R(P)\to\j_-$ is  compact operator.
\end{itemize}
The index of $P$ is the index of the above Fredholm operator, and is given by
$$
{\rm ind}_{\j_+}(P)=\dim R(P)\cap\j_--\dim N(P)\cap\j_+.
$$
\end{defi}

The connected components of $\p_{res}(\j_+)$ are parametrized by this index. The connected components of $\p_{res}(\j_+)$ are $\p_{res}^m(\j_+)$ given by
$$
\p_{res}^m(\j_+)=\{P\in\p_{res}(\j_+): {\rm ind}_{\j_+}(P)=m\},
$$
for $m\in\mathbb{Z}$.
\begin{prop}
If $T\in\k(\h)$, then $P_{Gr(T)}\in\p_{res}^0(\h\times\{0\})$ for the decomposition given by $\h\times\h=\h\times\{0\}\oplus \{0\}\times\h$. 

Conversely, if $T$ is closed and $P_{Gr(T)}\in \p_{res}(\j_+)$, then $T$ is compact.
\end{prop}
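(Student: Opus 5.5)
The plan is to work directly with the matrix form (\ref{P Gr(T)}) of $P_T$, written with respect to $\j_+=\h\times\{0\}$ and $\j_-=\{0\}\times\h$. We parametrize $R(P_T)=Gr(T)$ by the isometry $J:\h\to\h\times\h$,
$$
J f=\big((I+T^*T)^{-1/2}f,\ T(I+T^*T)^{-1/2}f\big).
$$
Its range is $Gr(T)$ (for bounded $T$ this is clear; for closed $T$ the range of $(I+|T|^2)^{-1/2}$ is $D(|T|)=D(T)$). It is isometric because $\|(I+|T|^2)^{-1/2}f\|^2+\|T(I+|T|^2)^{-1/2}f\|^2=\|f\|^2$. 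Hence $J$ is a unitary from $\h$ onto $R(P_T)$. Composing with $J$, the two conditions of the definition become conditions on ordinary operators in $\h$:
\begin{itemize}
\item $P_+P_T\big|_{R(P_T)}$ is Fredholm if and only if $(I+|T|^2)^{-1/2}$ is Fredholm;
\item $P_-P_T\big|_{R(P_T)}$ is compact if and only if $T(I+|T|^2)^{-1/2}=V|T|(I+|T|^2)^{-1/2}$ is compact.
\end{itemize}

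\emph{Direct implication.} Let $T\in\k(\h)$. Then $T(I+T^*T)^{-1/2}$ is compact, being a compact operator times a bounded one. Moreover $(I+T^*T)^{-1/2}$ is invertible, so it is Fredholm of index $0$. Alternatively, the index formula applies directly: $Gr(T)\cap(\{0\}\times\h)=\{0\}$, and $Gr(T)^\perp\cap(\h\times\{0\})=\{0\}$, since $(-T^*f,f)=(g,0)$ forces $f=0$. Both dimensions vanish, so ${\rm ind}=0$ and $P_T\in\p^0_{res}(\h\times\{0\})$.

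\emph{Converse.} Assume $T$ is closed, densely defined, and $P_T\in\p_{res}$. First use the Fredholm condition. The positive injective operator $(I+|T|^2)^{-1/2}$ has closed range exactly when it is bounded below. Bounded below means $\frac{1}{\sqrt{1+\varphi^2}}\ge c>0$ in the spectral picture, i.e.\ $|T|$ is bounded. Hence $T$ is bounded, with $D(T)=\h$ by closedness and the closed graph theorem. Second, use the compactness condition. Since $(I+|T|^2)^{1/2}$ is now a bounded operator, $T=\big(T(I+|T|^2)^{-1/2}\big)(I+|T|^2)^{1/2}$ is a compact operator times a bounded one, hence compact.

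The main obstacle is justifying the reduction via $J$ in the unbounded case. One must check that $J$ maps onto $Gr(T)$ and that the compressions really correspond to the entries $(I+|T|^2)^{-1/2}$ and $T(I+|T|^2)^{-1/2}$; the spectral theorem and the polar decomposition handle this. The other delicate point is that "Fredholm" forces closed range, hence boundedness below, of an injective positive operator. This is the step that excludes unbounded $T$.
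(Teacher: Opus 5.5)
Your proof is correct. It follows the paper's basic strategy of reading the two defining conditions off the entries of $P_T$ relative to $\h\times\{0\}\oplus\{0\}\times\h$, but it differs in ways worth noting.

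\textbf{Direct implication.} The paper uses the non-isometric graph parametrization. It observes that ${\bf F}(f,Tf)=(f,0)$ is a bounded bijection $Gr(T)\to\h\times\{0\}$, hence Fredholm of index zero, and that ${\bf K}(f,Tf)=Tf$ is compact. Your unitary $J$ is just the polar part of $f\mapsto(f,Tf)$, so it yields the equivalent operators $(I+|T|^2)^{-1/2}$ and $T(I+|T|^2)^{-1/2}$. Because $J$ is unitary onto $Gr(T)$, your translation of both conditions is an exact equivalence. By contrast, the paper passes without comment to the compressions $(I+|T|^2)^{-1}$ and $T(I+|T|^2)^{-1}$; that passage is valid but only one-way.

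\textbf{Converse.} Here the difference is more substantive. The paper argues in the Calkin algebra that $\pi(T)\pi((I+|T|^2)^{-1})=0$ together with the invertibility of $\pi((I+|T|^2)^{-1})$ forces $\pi(T)=0$. As written, this presupposes $T\in\b(\h)$, which is exactly what must be established when $T$ is only assumed closed. Your argument supplies that step. A Fredholm, injective, positive operator has closed range, hence is bounded below. So $(1+\varphi^2)^{-1/2}\ge c>0$ a.e., $|T|$ is bounded, and $D(T)=D(|T|)=\h$. Your factorization $T=\big(T(I+|T|^2)^{-1/2}\big)(I+|T|^2)^{1/2}$ then replaces the Calkin-algebra step.

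The paper's argument is shorter and purely algebraic, and it is fine once boundedness of $T$ is known. Yours is slightly longer but fully rigorous for unbounded closed $T$, which is the case the converse is really about.
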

\begin{proof}
The operator ${\bf F}:=\Pi_1P_{Gr(T)}:Gr(T)\to\h\times\{0\}$ is given by ${\bf F}(f,Tf)=(f,0)$. Clearly ${\bf F}$ is onto and $N({\bf F})=\{0\}$, i.e., is an isomorphism, thus  a Fredholm operator of index zero.

${\bf K}:=\Pi_2P_{Gr(T)}:Gr(T)\to \{0\}\times\h$ is given by ${\bf K}(f,Tf)=Tf$ is  a compact operator.

Suppose now that $P_{Gr(T)}\in\p_{res}(\j_+)$ for $T$ a closed operator. Then the facts that  $(I+|T|^2)^{-1}$ is Fredholm, and $T(I+|T|^2)^{-1}$ is compact, imply that $T$ is compact: indeed, consider the projection $\pi:\b(\h)\to\b(\h)/\k(\h)$ onto the Calkin algebra,  we have that $\pi((I+|T|^2)^{-1})$ is invertible and that $\pi(T)\pi((I+|T|^2)^{-1})=0$, thus $\pi(T)=0$, i.e., $T$ is compact.
\end{proof}

The following group acts in the restricted Grassmannian of $\j=\j_+\oplus\j_-$, for any decomposition, with $\dim\j_+=\dim\j_-$ ($=\infty$):
\begin{equation}\label{grupo Uinfinito} 
\u_\infty(\j)=\{U\in\b(\j): U-I\in\k(\j)\}.
\end{equation}
This group is well known, it is  one of the so called classical infinite dimensional  Banach-Lie groups \cite{harpe}. It is not difficult to prove that
$$
\u_\infty(\j)=\exp\{iX: X^*=X\in\k(\j), \|X\|\le \pi\}.
$$
It was shown in \cite{beltitaratiutumpach} (see also \cite{rectifiable}), that $\u_\infty(\j)$ acts transitively in each connected component of $\p_{res}(\j_+)$. Moreover, in \cite{rectifiable} it was shown that (adapted to our situation):
\begin{prop}{\rm (}Theorem 6.1 in {\rm \cite{rectifiable})}
Let ${\bf P}, {\bf Q}\in\p_{res}(\h\times\{0\})$ in the same connected component (i.e., with the same index with respect to the decomposition $\h\times\h=\h\times\{0\} \ \oplus \ \{0\}\times \h$). Then there exists a compact operator ${\bf X}^*={\bf X}$, $\|{\bf X}\|\le \pi/2$, which is codiagonal with respect to ${\bf P}$ and ${\bf Q}$, such that 
${\bf Q}=e^{i{\bf X}}{\bf P}e^{-i{\bf X}}$.  
\end{prop}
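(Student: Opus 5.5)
Given ${\bf P},{\bf Q}\in\p_{res}(\h\times\{0\})$ with the same index, the plan is to build ${\bf X}$ through the five-space decomposition of $\h\times\h$ associated to $R({\bf P})$ and $R({\bf Q})$, exactly as was done for $X_{A,B}$ in Section 5, and then check compactness piece by piece. Write $\h\times\h=(R({\bf P})\cap R({\bf Q}))\oplus(N({\bf P})\cap N({\bf Q}))\oplus(R({\bf P})\cap N({\bf Q}))\oplus(N({\bf P})\cap R({\bf Q}))\oplus\h_g$. On the first two summands we put ${\bf X}=0$. On $\h_g$, Remark \ref{21}.6 gives a unique selfadjoint ${\bf X}'$, ${\bf P}$-codiagonal, $\|{\bf X}'\|\le\pi/2$, with $e^{i{\bf X}'}{\bf P}e^{-i{\bf X}'}={\bf Q}$ there. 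In Halmos' two-projection model, $\h_g=\mathcal L\times\mathcal L$, ${\bf P}=\mathrm{diag}(1,0)$, and ${\bf Q}$ is given by $\cos^2\Theta,\sin\Theta\cos\Theta,\sin^2\Theta$ with $0<\Theta<\pi/2$; then ${\bf X}'$ is essentially $\begin{pmatrix}0&-i\Theta\\ i\Theta&0\end{pmatrix}$, which is codiagonal for both projections. On the summand $(R({\bf P})\cap N({\bf Q}))\oplus(N({\bf P})\cap R({\bf Q}))$ we need an isometry $W$ between the two spaces, and the exponent is $i\frac{\pi}{2}(-W^*\oplus W)$.

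The key input is that ${\bf P}-{\bf Q}$ is compact. Since ${\bf P},{\bf Q}\in\p_{res}$, their $1,2$ and $2,1$ entries relative to $\Pi_1,\Pi_2$ are compact. Their diagonal entries are congruent to $\Pi_1$ modulo compacts, because $\Pi_1{\bf P}\Pi_1$ Fredholm together with $\Pi_2{\bf P}$ compact forces ${\bf P}-\Pi_1\in\k$. Hence ${\bf P}-{\bf Q}\in\k(\h\times\h)$. Three consequences follow.

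First, $R({\bf P})\cap N({\bf Q})$ and $N({\bf P})\cap R({\bf Q})$ are eigenspaces of ${\bf P}-{\bf Q}$ for the eigenvalues $\pm1$, so they are finite dimensional. Since the indices agree, $\dim R({\bf P})\cap N({\bf Q})=\dim N({\bf P})\cap R({\bf Q})$. The cleanest way to see this is the formula ${\rm ind}_{\j_+}({\bf Q})-{\rm ind}_{\j_+}({\bf P})=\pm(\dim R({\bf P})\cap N({\bf Q})-\dim N({\bf P})\cap R({\bf Q}))$, i.e.\ additivity of the relative index of the Fredholm pair. So $W$ exists, and that summand contributes a finite-rank piece of ${\bf X}$.

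Second, on $\h_g$, $({\bf P}-{\bf Q})^2$ restricts to $\sin^2\Theta\oplus\sin^2\Theta$, which is compact. Hence $\sin\Theta$ is compact, and so is $\Theta=\arcsin(\sin\Theta)$, because $\Theta\le\frac{\pi}{2}\sin\Theta$ with $\arcsin(0)=0$ and functional calculus preserves compactness for continuous functions vanishing at $0$. Therefore ${\bf X}'$ is compact.

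Third, collecting the pieces, ${\bf X}$ is a finite-rank operator plus a compact one, selfadjoint, with $\|{\bf X}\|\le\pi/2$. It is codiagonal for ${\bf P}$, and for ${\bf Q}$ too, since each piece has this property. Moreover $e^{i{\bf X}}{\bf P}e^{-i{\bf X}}={\bf Q}$ holds summand by summand.

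I expect the main obstacle to be the dimension equality in the first consequence, which is where the index hypothesis enters. I would try to derive it from ${\rm ind}({\bf Q}|:R({\bf Q})\to\j_+)-{\rm ind}({\bf P}|)={\rm ind}({\bf Q}{\bf P}|:R({\bf P})\to R({\bf Q}))$, computed using the compact difference and the factorization $\Pi_1{\bf Q}{\bf P}\approx\Pi_1{\bf P}$ modulo compacts. Then ${\rm ind}({\bf Q}{\bf P}|_{R({\bf P})})=\dim R({\bf P})\cap N({\bf Q})-\dim R({\bf Q})\cap N({\bf P})$. This last identity holds because the generic part contributes an invertible piece, $\cos\Theta$ being invertible modulo compacts with trivial kernel and dense range, and the compact perturbation keeps everything Fredholm.
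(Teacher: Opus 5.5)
Your proof is correct. The paper gives no proof of this proposition: it quotes Theorem 6.1 of \cite{rectifiable}. What you supply is therefore a self-contained argument, built on the same five-space decomposition and Halmos model that the paper uses in Section 5 to construct $X_{A,B}$. It has two new ingredients.
\begin{enumerate}
\item The observation ${\bf P}-{\bf Q}\in\k(\h\times\h)$. This makes $R({\bf P})\cap N({\bf Q})$ and $N({\bf P})\cap R({\bf Q})$ finite dimensional, and it forces $\sin\Theta$, hence $\Theta$, to be compact on $\h_g$.
\item Additivity of the index of Fredholm pairs. This turns equality of the $\p_{res}$ indices into $\dim R({\bf P})\cap N({\bf Q})=\dim N({\bf P})\cap R({\bf Q})$. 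That is exactly the condition of Remark \ref{21}.6, now between finite-dimensional spaces, so the $W$-block of ${\bf X}$ has finite rank.
\end{enumerate}

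A few details should be tightened.

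For ${\bf P}-\Pi_1\in\k$, write ${\bf P}$ as a matrix with entries $a,b,b^*,d$ relative to $\Pi_1,\Pi_2$. Then $a-a^2=bb^*$ is compact, and $a=FF^*$ is Fredholm, where $F=\Pi_1{\bf P}|_{R({\bf P})}$. So the class of $a$ in the Calkin algebra is an invertible projection, hence equal to $1$.

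In the index computation your sign is reversed. The operator $\Pi_1{\bf Q}{\bf P}|_{R({\bf P})}$ differs from $\Pi_1{\bf P}|_{R({\bf P})}$ by the compact $\Pi_1({\bf Q}-{\bf P}){\bf P}$, so composition gives ${\rm ind}({\bf Q}{\bf P}|_{R({\bf P})})={\rm ind}_{\j_+}({\bf P})-{\rm ind}_{\j_+}({\bf Q})$. This is harmless, since you only need the quantity to vanish.

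Finally, the identity ${\rm ind}({\bf Q}{\bf P}|_{R({\bf P})})=\dim R({\bf P})\cap N({\bf Q})-\dim N({\bf P})\cap R({\bf Q})$ needs no generic-part analysis:
\begin{enumerate}
\item ${\bf Q}{\bf P}|_{R({\bf P})}$ is Fredholm, because ${\bf P}{\bf Q}{\bf P}|_{R({\bf P})}=I-{\bf P}({\bf P}-{\bf Q}){\bf P}|_{R({\bf P})}$ and the analogous ${\bf Q}{\bf P}{\bf Q}|_{R({\bf Q})}$ are compact perturbations of the identity.
\item Its kernel is $R({\bf P})\cap N({\bf Q})$.
\item The kernel of its adjoint ${\bf P}{\bf Q}|_{R({\bf Q})}$ is $R({\bf Q})\cap N({\bf P})$.
\end{enumerate}
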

In other words, ${\bf P}$ and ${\bf Q}$ in the same component of $\p_{res}(\h\times\{0\})$ are joined by a minimal geodesic, with compact velocity.
\begin{coro}
Let $S,T\in\k(\h)$. Then there exists a minimal geodesic  joining the graphs of $S$ and $T$, which lies inside  $\p_{res}^0(\h\times\{0\})$.
\end{coro}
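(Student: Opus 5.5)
The proof will combine the first proposition of this section with Theorem 6.1 of \cite{rectifiable}, and then check that the resulting geodesic stays in $\p_{res}^0(\h\times\{0\})$.

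First, the proposition just proved shows that for $S,T\in\k(\h)$ both $P_{Gr(S)}$ and $P_{Gr(T)}$ lie in $\p_{res}^0(\h\times\{0\})$, relative to the decomposition $\h\times\h=\h\times\{0\}\oplus\{0\}\times\h$. In particular they have the same index, namely zero, so they lie in the same connected component.

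Second, Theorem 6.1 of \cite{rectifiable}, stated above, then gives a compact selfadjoint ${\bf X}$ with the following properties:
\begin{itemize}
\item $\|{\bf X}\|\le\pi/2$;
\item ${\bf X}$ is codiagonal with respect to ${\bf P}=P_{Gr(S)}$;
\item $e^{i{\bf X}}{\bf P}e^{-i{\bf X}}={\bf Q}=P_{Gr(T)}$.
\end{itemize}
Set $\delta(t)=e^{it{\bf X}}{\bf P}e^{-it{\bf X}}$ for $t\in[0,1]$. By Remark \ref{21}.4 this is a geodesic of $\p(\h\times\h)$, since ${\bf X}={\bf X}^*\in\c_{\bf P}$. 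By Remark \ref{21}.5 it is minimal, because $|t|\,\|{\bf X}\|\le\pi/2$ on $[0,1]$.

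Third, we show $\delta(t)\in\p_{res}^0(\h\times\{0\})$ for every $t$. Since ${\bf X}$ is compact, $U_t:=e^{it{\bf X}}$ satisfies $U_t-I\in\k(\h\times\h)$, so $U_t\in\u_\infty(\h\times\h)$. By \cite{beltitaratiutumpach}, $\u_\infty$ acts on each connected component of $\p_{res}(\h\times\{0\})$. Hence $\delta(t)=U_t{\bf P}U_t^*$ stays in the component of ${\bf P}$, which is $\p_{res}^0$.

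If one prefers a direct check of this last step, write $\delta(t)-{\bf P}=U_t{\bf P}U_t^*-{\bf P}$, which is compact. Then $\Pi_2\delta(t)$ differs from $\Pi_2{\bf P}$ by a compact operator, so it is compact. Likewise $\Pi_1\delta(t)\big|_{R(\delta(t))}$ is a compact perturbation of an operator unitarily equivalent to the Fredholm map $\Pi_1{\bf P}\big|_{R({\bf P})}$, after transporting $R({\bf P})$ to $R(\delta(t))$ via $U_t$. The index is then constant along $t$ by continuity and homotopy invariance of the index.

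The only delicate point is this membership/index check. It is handled either by the quoted invariance of components under $\u_\infty$, or by the compact-perturbation argument just sketched.
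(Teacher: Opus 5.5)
Your proof is correct and matches the argument the paper leaves implicit (it states the corollary without a written proof). The preceding proposition puts $P_{Gr(S)}$ and $P_{Gr(T)}$ in $\p_{res}^0(\h\times\{0\})$. The quoted Theorem 6.1 then supplies a compact codiagonal exponent ${\bf X}$ with $\|{\bf X}\|\le\pi/2$. Finally, compactness of ${\bf X}$ places $e^{it{\bf X}}$ in $\u_\infty(\h\times\h)$, which preserves the index-zero component. In your direct check you do not need homotopy invariance: $\Pi_1(e^{it{\bf X}}-I){\bf P}\big|_{R({\bf P})}$ is compact, so the transported Fredholm operators already have equal index.
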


\section{Unitary equivalent operators}

Let $T:D(T)\subset\h\to\h$ be closed and densely defined, and $U$ a unitary operator in $\h$. Define $UTU^*$ with domain $D(UTU^*)=UD(T)$,  by $UTU^*f=UTg$, if $g\in D(T)$, $g=U^*f$. Clearly $UTU^*$ is a closed operator with this (dense) domain. If $B=UTU^*$, we say that $T$ and $B$ are unitarily equivalent. Denote by
$$
{\bf U}:\h\times\h\to\h\times\h, \ {\bf U}(f,g)=(Uf,Ug).
$$
Apparently,
$$
Gr(UTU^*)=\{(f,UTU^*f): f\in UD(T)\}=\{(Ug, UTg): g (=U^*f)\in D(T)\}
$$
$$
={\bf U}(Gr(T)).
$$
It follows  that
\begin{prop}
If $T:D(T)\subset\h\to\h$ and $B:D(B)\subset\h\to\h$ are closed, densely defined  and unitarily equivalent, then $Gr(T)$ and $Gr(B)$ can be joined in $\p(\h\times\h)$ by a smooth curve of graphs, of unitarily equivalent closed and densely defined operators.
\end{prop}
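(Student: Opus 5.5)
The plan is to connect $U$ to the identity inside the unitary group of $\h$ by a smooth path, and then transport $T$ along that path. Since $\u(\h)$ is connected, we may write $U=e^{iH}$ with $H=H^*\in\b(\h)$. For instance, take $H=\arg(U)$ via the Borel functional calculus, with a branch of the argument taking values in $(-\pi,\pi]$; this gives $\|H\|\le\pi$. Then define $U_t=e^{itH}$ for $t\in[0,1]$. This is a norm-smooth (indeed analytic) curve of unitaries with $U_0=I$ and $U_1=U$.

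Next, set $T_t:=U_tTU_t^*$, with domain $U_tD(T)$. As observed just before the statement, each $T_t$ is closed, densely defined and unitarily equivalent to $T$. Moreover, $Gr(T_t)={\bf U}_t(Gr(T))$, where ${\bf U}_t(f,g)=(U_tf,U_tg)$, that is, ${\bf U}_t=U_t\oplus U_t$. It follows that
$$
P_{Gr(T_t)}={\bf U}_tP_{Gr(T)}{\bf U}_t^*.
$$
The map $t\mapsto{\bf U}_t=e^{it(H\oplus H)}$ is a smooth curve in $\u(\h\times\h)$. Conjugating a fixed projection by it therefore gives a $C^\infty$ curve in $\p(\h\times\h)$ (a curve in the orbit of $P_{Gr(T)}$ under the unitary action of Remark \ref{21}.1). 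This curve starts at $P_{Gr(T)}$, ends at $P_{Gr(UTU^*)}=P_{Gr(B)}$, and consists entirely of graphs of operators unitarily equivalent to $T$, hence also to $B$.

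There is no serious obstacle. The only points needing care are the existence of a bounded selfadjoint logarithm of $U$, which is immediate from the spectral theorem for unitaries, and the identity $Gr(U_tTU_t^*)={\bf U}_t(Gr(T))$. The latter is exactly the computation displayed before the statement, applied with $U_t$ in place of $U$.
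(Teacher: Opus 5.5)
Your proposal is correct and follows essentially the same route as the paper: the paper also writes $U=e^{iX}$ with $X$ bounded selfadjoint, $\|X\|\le\pi$, sets $T(t)=e^{itX}Te^{-itX}$, and uses $P_{T(t)}={\bf U}(t)P_T{\bf U}(t)^*$ to get a smooth curve of graphs from $P_T$ to $P_B$. Your explicit Borel functional calculus construction of the logarithm $H=\arg(U)$ supplies the one detail the paper leaves implicit.
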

\begin{proof}
Let $U$ be  a unitary operator such that $B=UTU^*$ (with $UD(T)=D(B)$). There exists a bounded selfadjoint operator $X$, $\|X\|\le\pi$, such that $U=e^{iX}$. Then $U(t)=e^{itX}$ is a smooth curve of unitaries, and $T(t)=U(t)TU^*(t)$ is a path of unitarily equivalent closed and densely defined operators, with
$$
P_{T(t)}={\bf U}(t)P_T{\bf U}^*(t),
$$
for ${\bf U}(t)(f,g)=(U(t)f,U(t)g)$ as above. $P_{T(t)}$  is a smooth path in $\p(\h\times\h)$ joining $P_T$ and $P_B$, and consists of graphs of closed (densely defined) operators. 
\end{proof}

\begin{ejem}
In this example we consider two classical operators, which are unitary equivalent and joined by a unique geodesic. 
Consider $\h=L^2(\mathbb{R})$ with Lebesgue measure, $T:D(T)\subset L^2(\mathbb{R})\to L^2(\mathbb{R})$, given by $D(T)=\{f\in L^2(\mathbb{R}): tf(t)\in L^2(\mathbb{R})\}$ and $Tf(t)=2\pi tf(t)$. Consider also $B:D(B)\subset L^2(\mathbb{R})\to L^2(\mathbb{R})$, where $D(B)=\{g\in L^2(\mathbb{R}): g' \hbox{ exists } pp \hbox{ and } g'\in L^2(\mathbb{R})\}$, the Sobolev space $H^1_2(\mathbb{R})$,  and $Bg=-ig'$. Clearly $T$ and $B$ are selfadjoint operators, that are unitarily equivalent. The Fourier-Plancherel transform $\f:L^2(\mathbb{R})\to L^2(\mathbb{R})$, $\f f(x)=\int_{-\infty}^\infty f(t)e^{-itx} dt$ intertwines $T$ and $B$: $\f T=B\f $, so that $B=\f T\f^*$, and also $T\f=-\f B$.  In particular, $Gr(T)$ and $Gr(B)$ are connected in $\p(L^2(\mathbb{R})\times L^2(\mathbb{R}))$ by a smooth curve of graphs. The intermediate operators $A(t)=e^{itX}Ae^{-itX}$ are clearly unbounded. Here $X=\log \f$. Since $\f^4=I$, $\f$ has four eigenvalues $1, -1, i, -i$, with (projections onto the) eigenspaces $E_1, E_{-1}, E_i, E_{-i}$, so that $X=\log \f=-\pi E_{-1}+\frac{\pi}{2}E_i-\frac{\pi}{2}E_{-i}$. These projections $E_j$ are polynomial expressions in terms of $\f$ ($E_1$ is not needed in the expression of $X$, it corresponds to the eigenvalue $0$ of $X$):
$$
E_{-1}=\frac14(I-\f+\f^2-\f^3), E_i=\frac14(I-i\f-\f^2+i\f^3), E_{-i}=\frac14(I+i\f-\f^2-i\f^3),
$$
so that
$$
X=\frac{\pi}{4}\left(-I+(1-i)\f-\f^2+(1+i)\f^3\right).
$$
The curve 
$$
P_{T(t)}=\left(\begin{array}{cc} e^{itX} & 0 \\ 0 & e^{itX}\end{array}\right)P_A\left(\begin{array}{cc} e^{-itX} & 0 \\ 0 & e^{-itX}\end{array}\right)
$$
of graphs, joining $Gr(T)$ and $Gr(B)$, induced by the operator $X=\log\f$ is not a geodesic of $\p(\h\times\h)$. Indeed,  the exponent ${\bf X}=\left(\begin{array}{cc} X & 0 \\ 0 & X \end{array}\right)$ is not codiagonal with respect to $P_T$: it does not hold that ${\bf X}(Gr(T))$ is orthogonal to $Gr(T)$. To see this, pick $\psi_0(x)=e^{-\frac{x^2}{2}}$, with $T\psi_0(x)=-xe^{-\frac{x^2}{2}}=-\psi_1(x)$, which are eigenvectors of $\f$, with eigenvalues $1$ and $i$, respectively. Then $T\psi_0=ixe^{-\frac{x^2}{2}}=i\psi_1(x)$ and
$$
{\bf X}(\psi_0, T\psi_0)=(X\psi_0,-iX\psi'_0(x))=(0, -i\frac{\pi}{2}\psi_1),
$$
so that 
$$
\langle {\bf X}(\psi_0, T\psi_0), (\psi_0, T\psi_0)\rangle=\langle (0, -i\frac{\pi}{2}\psi_1),(\psi_0,i\psi_1)\rangle=-\frac{\pi}{2}\|\psi_1\|_2^2\ne 0,
$$
and therefore ${\bf X}$ is not $P_T$-codiagonal.

Note also that $Gr(T)$ and $Gr(B)$ are in generic position. 
Indeed, first note that $Gr(T)\cap Gr(B)=\{0\}$: pick $f\in Gr(T)\cap Gr(B)$, then $f\in D(T)\cap D(B)$, i.e., $tf(t)\in L^2(\mathbb{R})$, $f'$ exists a.e., $f'\in L^2(\mathbb{R})$ and $2\pi tf(t)=-if'(t)$. This clearly implies that $f$ is absolutely continuous, and thus $f$ and $t f$ are continuous, and therefore $g$ is $C^1$ in $\mathbb{R}$. Then   $f$ is one of the  smooth  solutions of the equation $2\pi i xy=\frac{dy}{dx}$, which  are of the form $y=C e^{2\pi i x^2}$. These  satisfy $|y|=C$, and since $f\in L^2(\mathbb{R})$, it must be $f=0$. Then also $Gr(T)^\perp\cap Gr(B)^\perp=\Omega(Gr(T)\cap Gr(B))=\{0\}$. Similarly, if $f\in Gr(T)\cap Gr(B)^\perp$, then $f=i g'$ and $2\pi tf(t)=g$. Then, as above, $g$ is continuous, thus $t f$ is continuous. This implies that $f$ is continuous and $g$ is $C^1$  in any closed interval ${\bf I}$,  such that $0\notin {\bf I}$. Then  $f$ is a solution of $2\pi ix\frac{dy}{dx}=(1-2\pi i)y$ in the interval ${\bf I}$. These solutions are of the form $y=C\frac{1}{x}e^{\frac{i}{2\pi} \ ln(|x|)}$, and have modulus $|y|=C \frac{1}{|x|}$ for $x\in {\bf I}$. For an integer $n\ge 1$, put ${\bf I}=[\frac1n,n]$. Then there exists a constant $C_n\ge 0$ such that   $|f(t)|=\frac{C_n}{|t|}$ in $[\frac1n,n]$. Then 
$$
+\infty>\int_{-\infty}^\infty |f(t)|^2dt\ge\int_{\frac1n}^n|f(t)|^2dt=C_n^2 \int_{\frac1n}^n \frac{1}{|t|^2}dt=C_n^2(n-\frac1n),
$$
i.e., $C_n\to 0$ if $n\to \infty$. This implies that $f=0$ in $(0,+\infty)$. Similarly, reasoning analogously in the negative axis, it must be $f=0$. Then, also  $Gr(T)^\perp\cap Gr(B)=\Omega(Gr(T)\cap Gr(B)^\perp)=\{0\}$. 

It follows that there exists a unique geodesic (up to reparametrization) joining $P_T$ and $P_B$.
\end{ejem}

\section{Common complements}

If $S$ and $T$ are bounded  operators, then $Gr(S)$ and $Gr(T)$ have a common complement: clearly $Gr(S)\dot{+}\left(\{0\}\times\h\right)=Gr(T)\dot{+}\left(\{0\}\times\h\right)=\h\times\h$. This subspace $\{0\}\times\h$ is no longer a complement for $Gr(S)$ if $S$ is unbounded, as remarked above, $Gr(S)\oplus\{0\}\times\h=D(S)\times\h$. Thus it is a fair question if there exists a common complement for $Gr(S)$ and $Gr(T)$ if $S$ or $T$ are unbounded. 

M. Lauzon and S. Treil in \cite{lauzontreil} characterized pairs of closed subspaces $\s,\t$ of a Hilbert space $\j$ such that there exists a closed subspace $\z\subset\j$ with $\s\dot{+}\z=\j=\t\dot{+}\z=\j$. Shortly after, J. Giol \cite{giol} proved  the following result:
\begin{teo}\label{teo giol}{\rm (Proposition 6.2 in  \cite{giol})}
The closed subspaces $\s,\t\subset\j$ have a common complement if and only if there exists an orthogonal projection $P$ in $\j$ such that $\|P_\s-P\|<1$ and $\|P_\t-P\|<1$.
\end{teo}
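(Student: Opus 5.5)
We prove the two implications separately, using Theorem \ref{havin} together with the Krein--Krasnoselskii--Milman formula exactly as in the proof of Theorem \ref{42}. The key observation is that this formula characterizes the condition $\|P-Q\|<1$ for two orthogonal projections in terms of complements. From
$$
\|P-Q\|=\max\{\|P(I-Q)\|,\ \|Q(I-P)\|\},
$$
together with Theorem \ref{havin} and the density argument used in Theorem \ref{42}, we get the following equivalence: $\|P-Q\|<1$ holds if and only if $R(P)\dot{+}N(Q)=\j$ and $R(Q)\dot{+}N(P)=\j$. The argument runs as follows.

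\begin{itemize}
\item From $\|P(I-Q)\|<1$, Theorem \ref{havin} gives that $R(P)\cap N(Q)=\{0\}$ and that $R(P)+N(Q)$ is closed.
\item From $\|Q(I-P)\|<1$, the same theorem gives $R(Q)\cap N(P)=\{0\}$.
\item Taking orthogonal complements, $(R(P)+N(Q))^\perp=N(P)\cap R(Q)=\{0\}$. Since $R(P)+N(Q)$ is closed, it equals $\j$. The second direct sum follows symmetrically.
\end{itemize}

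\emph{Sufficiency.} Suppose $P$ is an orthogonal projection with $\|P_\s-P\|<1$ and $\|P_\t-P\|<1$. Apply the equivalence above with $Q=P_\s$ and with $Q=P_\t$. This gives $\s\dot{+}N(P)=\j$ and $\t\dot{+}N(P)=\j$. Hence $\z:=N(P)$ is a common complement of $\s$ and $\t$.

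\emph{Necessity.} Suppose $\z$ is a closed subspace with $\s\dot{+}\z=\t\dot{+}\z=\j$. The natural first candidate is $P=P_{\z^\perp}$. Then $N(P)=\z$ is a complement of both $\s$ and $\t$. However, the equivalence also requires that $R(P)=\z^\perp$ be complemented by $\s^\perp$ and by $\t^\perp$. This does hold: a direct sum decomposition $\s\dot{+}\z=\j$ with $\s,\z$ closed passes to orthogonal complements as $\s^\perp\dot{+}\z^\perp=\j$. Indeed, the oblique projection $E=P_{\s\parallel\z}$ is bounded, and $E^*$ is the projection onto $\z^\perp$ with nullspace $\s^\perp$. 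Therefore $R(P)\dot{+}N(P_\s)=\j$ and $R(P_\s)\dot{+}N(P)=\j$. By the equivalence, $\|P_\s-P\|<1$, and likewise $\|P_\t-P\|<1$.

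The main obstacle is establishing the two-sided equivalence cleanly, in particular the converse direction: two direct sum decompositions must yield $\|P(I-Q)\|<1$ and $\|Q(I-P)\|<1$. This is exactly Theorem \ref{havin} applied twice, once to the pair $R(P),N(Q)$ and once to the pair $R(Q),N(P)$. The remaining care point is the duality fact $\s^\perp\dot{+}\z^\perp=\j$, which we obtain from the adjoint of the bounded idempotent $P_{\s\parallel\z}$.
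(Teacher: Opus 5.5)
Your proof is correct and complete. The paper gives no proof of this statement: it imports it from Giol (Proposition 6.2 in \cite{giol}) and uses it as a black box, so there is no in-paper argument to compare against.

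What you supply is a self-contained proof built from the same tools the paper uses for Theorem \ref{42}:
\begin{itemize}
\item Theorem \ref{havin}, applied to the pairs $R(P),N(Q)$ and $R(Q),N(P)$;
\item the Krein--Krasnoselskii--Milman formula;
\item the duality $\s\dot{+}\z=\j\Rightarrow\s^\perp\dot{+}\z^\perp=\j$, obtained from the adjoint of $P_{\s\parallel\z}$.
\end{itemize}

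One simplification is available. The two direct-sum conditions in your equivalence are not independent. The same adjoint argument shows that $R(P)\dot{+}N(Q)=\j$ holds iff $R(Q)\dot{+}N(P)=\j$. Your lemma therefore reduces to $\|P-Q\|<1 \iff R(Q)\dot{+}N(P)=\j$. For $Q=P_\s$ and $P=P_{\z^\perp}$ this is the classical criterion $\|P_\s+P_\z-I\|<1\iff\s\dot{+}\z=\j$. Both directions of the theorem then follow immediately.

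Your route also has a side benefit. Theorem \ref{42} becomes literally the case $P=\Pi_1$ of the same lemma: $N(\Pi_1)=\{0\}\times\h$ is a complement of $\s$ iff $\s$ is the graph of an everywhere defined, hence bounded, operator. Citing Giol keeps the exposition short, but leaves this connection implicit.
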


\begin{rem}\label{no densidad}
This result is meaningful for the topology of $\p(\j)$. It means that for any $P\in\p_\infty(\j)$, although it holds that $\|P-Q\|\le 1$ for any other $Q\in\p_\infty$, the ball $\mathbb{B}_1(P)=\{E\in\p_\infty(\j): \|E-P\|<1\}$ is not dense in $\p_\infty(\j)$: if $P$ and $Q$ do not have a common complement, then 
$\mathbb{B}_1(P)\cap\mathbb{B}_1(Q)=\emptyset$.
\end{rem}

\begin{rem}
In \cite{complemento comun}, it was remarked that if $\s$ and $\t$ are  closed subspaces, such that there exists a  geodesic $\delta$ joining them, then $\s$ and $\t$ have a common complement.
\end{rem}

\begin{prop}
If $S$ is bounded and $T$ is closed and densely defined, then $Gr(S)$ and $Gr(T)$ have a common complement.
\end{prop}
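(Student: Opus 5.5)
The plan is to reduce to the case $S=0$ by a bounded invertible change of coordinates of $\h\times\h$, and then to use the geodesic of Theorem \ref{teo eder} to produce the common complement. Consider
$$
L:\h\times\h\to\h\times\h,\qquad L(f,g)=(f,g-Sf).
$$
Since $S$ is bounded, $L$ is bounded and invertible, with $L^{-1}(f,g)=(f,g+Sf)$. It is not unitary, but invertible operators preserve closed direct sum decompositions: if $\z$ is a common complement of $L(Gr(S))$ and $L(Gr(T))$, then $L^{-1}(\z)$ is a closed common complement of $Gr(S)$ and $Gr(T)$. Now compute the images. First, $L(Gr(S))=\{(f,0):f\in\h\}=\h\times\{0\}$. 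Second, $L(Gr(T))=\{(f,(T-S)f): f\in D(T)\}=Gr(T-S)$. The operator $T-S$, with domain $D(T)$, is densely defined, and it is closed because a bounded perturbation of a closed operator is closed. This last fact is itself visible here: $L$ is a homeomorphism, so it maps the closed subspace $Gr(T)$ onto a closed subspace.

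So it suffices to show that $\h\times\{0\}$ and $Gr(R)$ have a common complement for an arbitrary closed densely defined $R$ (here $R=T-S$). By Theorem \ref{teo eder} there is a geodesic $\delta(t)=e^{itZ}\Pi_1e^{-itZ}$ with $\delta(0)=\Pi_1$ and $\delta(1)=P_R$. Its exponent is
$$
Z=\left(\begin{array}{cc} 0 & i\arctan(|R|)V^* \\ -iV\arctan(|R|) & 0\end{array}\right),
$$
so $\|Z\|\le\|\arctan(|R|)\|\le\pi/2$. At this point one could simply invoke the remark citing \cite{complemento comun}: two subspaces joined by a geodesic have a common complement. To keep the argument self-contained I would instead go through Theorem \ref{teo giol}, taking as the intermediate projection the midpoint $P:=\delta(1/2)$.

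The step needing care is the norm estimate $\|\Pi_1-P\|<1$ and $\|P_R-P\|<1$. Write $P=U\Pi_1U^*$ with $U=e^{iZ/2}$; then $\|U^*-I\|$ is small because $\|Z/2\|\le\pi/4$. A crude bound $\|\Pi_1-P\|\le 2\|U-I\|=4\sin(\pi/8)$ is not below $1$, so it is not good enough. The sharper route uses that $Z$ is $\Pi_1$-codiagonal. Then $Z^2$ is diagonal, and one computes explicitly
$$
\|e^{itZ}\Pi_1e^{-itZ}-\Pi_1\|=\sin(t\|Z\|).
$$
This is the standard computation for geodesics in $\p(\j)$: in the two-by-two picture the difference of the projections has norm $\sin$ of the largest angle. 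For $t=1/2$ it gives $\|P-\Pi_1\|\le\sin(\pi/4)<1$.

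By symmetry, $\delta(1/2)$ is also reached from $P_R$ along the reversed geodesic $s\mapsto\delta(1-s)$, which has exponent $-Z$ codiagonal with respect to $P_R$, again with norm at most $\pi/2$. The same formula then gives $\|P-P_R\|\le\sin(\pi/4)<1$. Theorem \ref{teo giol} now yields a common complement of $\h\times\{0\}$ and $Gr(R)$. Pulling it back by $L^{-1}$, as explained above, gives a common complement of $Gr(S)$ and $Gr(T)$.
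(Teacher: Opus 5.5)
Your proof is correct, and it takes a genuinely different route from the paper in how it handles $S$. The paper keeps $S$ in place. It invokes Theorem~\ref{42} to get $\|P_S-\Pi_1\|<1$ and takes the geodesic $\delta(t)=e^{itX}\Pi_1e^{-itX}$ from $\Pi_1$ to $P_T$. Using the anticommutation identity $\delta(t)-\Pi_1=\frac12(e^{2itX}-I)(2\Pi_1-I)$, which is also what underlies your $\sin$ formula, it shows that $\|P_T-\delta(t_0)\|=\frac12\|e^{2i(1-t_0)X}-I\|<1$ for every $t_0\in(0,1)$. It then chooses $t_0$ small, by continuity, so that $\|P_S-\delta(t_0)\|<1$ as well, and applies Theorem~\ref{teo giol}.

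You instead apply the shear $L(f,g)=(f,g-Sf)$. It is not unitary, so it does not preserve distances in $\p(\h\times\h)$, but it does preserve the property of having a common complement. It carries $Gr(S)$ to $\h\times\{0\}$ and $Gr(T)$ to $Gr(T-S)$, so a single geodesic from $\Pi_1$ to $P_{T-S}$ suffices, and its midpoint lies within $\sin(\pi/4)$ of both ends.

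What your route buys:
\begin{itemize}
\item It avoids Theorem~\ref{42} and the continuity step.
\item It produces an explicit intermediate projection with an explicit bound.
\item It shows that the proposition is really the special case $S=0$, which, as you note, already follows from the cited remark that subspaces joined by a geodesic have a common complement.
\end{itemize}

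What the paper's route buys: it stays entirely within the metric geometry of $\p(\h\times\h)$, making use of the unit-ball description of $\Gamma^b$. It needs neither the closedness of $T-S$ nor the transport of complements under a non-unitary map.
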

\begin{proof}
By Theorem \ref{42}, we know that $\|P_S-\Pi_1\|<1$.  Let $\delta(t)$ be the unique geodesic (Theorem 5.7 in \cite{eder}) between $\delta(0)=\Pi_1$ and $\delta(1)=P_T$: $\delta(t)=e^{itX}\Pi_1 e^{-itX}$, with $X^*=X$, codiagonal with respect to $\Pi_1$ such that $\|X\|\le \pi/2$.
As remarked in the section of preliminaries, the fact   $X$ is $\Pi_1$-codiagonal means that $X$ anti-commutes with the symmetry $2\Pi_1-I$. Then, for any $t_0\in(0,1)$
$$
P_T-\delta(t_0)=e^{iX}\Pi_1e^{-iX}-e^{i t_0 X}\Pi_1e^{-i t_0 X}=\frac12\{e^{iX}(2\Pi_1-I)e^{-iX}-e^{it_0X}(2\Pi_1-I)e^{-it_0X}\}
$$
$$
=\frac12\{e^{2iX}(2\Pi_1-I)-e^{2it_0X}(2\Pi_1-I)\}=\frac12 e^{2it_0X}\left(e^{2i(1-t_0)X}-I\right)(2\Pi_1-I).
$$
Then, 
$$
\|P_T-\delta(t_0)\|=\frac12 \|e^{i(1-t_0)X}-I\|=\sup\{|e^{i2(1-t_0)s}-1|: s\in\sigma(X)\}
$$
$$
\le\sup\{|1-e^{ir}|: t\in[(1-t_0)\pi,(1-t_0)\pi]\},
$$
because $\|X\|\le\pi/2$. Clearly, since $1-t_0<1$,  $|1-e^{ir}|<2$ for such $r$. Then $\|P_T-\delta(t_0)\|<1$  for any $0<t_0<1$. 

On the other hand, by a continuity argument, the fact that $\|P_S-\delta(0)\|<1$ implies that for sufficiently small $t_0$, one has $\|P_S-\delta(t_0)\|<1$. That is, we have found an intermediate projection $\delta(t_0)$, at distance less than $1$ from $P_\s$ and $P_\t$.
\end{proof}

\begin{lem}\label{lema 74}
Let $T$ be a closed operator with dense domain and dense range, such that there exists $r>0$ with 
$$
\|Tf\|\ge r\|f\|, \hbox{ for all } f\in D(T).
$$
Then
$\|P_T-\Pi_2\|<1$.
\end{lem}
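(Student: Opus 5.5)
The plan is to reduce the statement to Theorem \ref{42} by a change of coordinates that exchanges the two factors of $\h\times\h$. Let $J(f,g)=(g,f)$, a selfadjoint unitary on $\h\times\h$. It satisfies $J\Pi_2J=\Pi_1$ and
$$
J(Gr(T))=\{(Tf,f): f\in D(T)\}.
$$
Hence $\|P_T-\Pi_2\|=\|JP_TJ-\Pi_1\|=\|P_{J(Gr(T))}-\Pi_1\|$. By Theorem \ref{42}, it therefore suffices to show that $J(Gr(T))$ is the graph of a \emph{bounded} operator defined on all of $\h$.

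First, the lower bound $\|Tf\|\ge r\|f\|$ makes $T$ injective, so $J(Gr(T))=\{(g,T^{-1}g): g\in R(T)\}$ is the graph of $T^{-1}:R(T)\to\h$. This graph is closed because $Gr(T)$ is closed and $J$ is unitary. Moreover $\|T^{-1}g\|\le r^{-1}\|g\|$, so $T^{-1}$ is bounded on its domain.

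Next, I show that $R(T)$ is closed. Take $Tf_n\to g$. The lower bound makes $(f_n)$ Cauchy, so $f_n\to f$. Closedness of $T$ then gives $f\in D(T)$ and $Tf=g$. Since $R(T)$ is also dense by hypothesis, $R(T)=\h$. Thus $T^{-1}\in\b(\h)$, and $J(Gr(T))=Gr(T^{-1})$ is the graph of a bounded operator. Theorem \ref{42} now gives $\|P_{T^{-1}}-\Pi_1\|<1$, which is $\|P_T-\Pi_2\|<1$.

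The only step needing care is the conjugation identity $JP_{\s}J=P_{J\s}$ together with $J\Pi_2J=\Pi_1$. Both are immediate because $J$ is a selfadjoint unitary. No step is really an obstacle; the substantive input is Theorem \ref{42}. For an explicit estimate, one can instead apply Theorem \ref{havin} directly, checking that $Gr(T)\dot{+}(\h\times\{0\})=\h\times\h$ and $Gr(T)^\perp\dot{+}(\{0\}\times\h)=\h\times\h$, together with the Krein–Krasnoselski–Milman formula. This mirrors the proof of Theorem \ref{42}, with $T^{-1}$ playing the role of $S$.
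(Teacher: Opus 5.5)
Your proof is correct, and it takes a genuinely different route from the paper's.

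The paper's argument runs as follows:
\begin{itemize}
\item dense range and the lower bound make the polar factor $V$ unitary;
\item conjugating by ${\bf V}=\mathrm{diag}(I,V)$ replaces $P_T-\Pi_2$ by $P_{|T|}-\Pi_2$;
\item the spectral theorem gives $|T|=M_\varphi$ with $\varphi\ge r$;
\item the norm of the resulting $2\times 2$ matrix function $A_\varphi$ is then bounded directly.
\end{itemize}
You instead swap coordinates with $J$. You note that closedness, the lower bound and dense range together give $R(T)=\h$ and $T^{-1}\in\b(\h)$, so $J(Gr(T))=Gr(T^{-1})$, and you then quote Theorem \ref{42}.

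Your argument is shorter and avoids the polar decomposition and the spectral theorem. It never uses density of $D(T)$. Because Theorem \ref{42} is an equivalence, it also gives the converse: $\|P_T-\Pi_2\|<1$ holds exactly when $T$ maps $D(T)$ bijectively onto $\h$ with bounded inverse. This covers the paper's remark after the lemma about the case when $T$ is not bounded below.

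What the paper's computation buys is a quantitative bound in terms of $r$. In fact $A_\varphi^2=(1+\varphi^2)^{-1}I$, not $(1+\varphi^2)^{-2}I$ as displayed in the paper. So the bound it actually yields is $(1+r^2)^{-1/2}$, which is attained for $T=rI$, rather than $(1+r^2)^{-1}$.

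Relatedly, your closing remark is slightly off. Applying Theorem \ref{havin} directly would not give an explicit estimate, since that theorem is purely qualitative.
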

\begin{proof}
The fact that $R(T)$ is dense means that in the  polar decomposition $T=V|T|$, $V$ is a unitary operator ($N(T^*)=R(T)^\perp=\{0\}$), i.e., $|T|$ has dense range and trivial nullspace.

Recall ${\bf V}=\left(\begin{array}{cc} I & 0  \\ 0 & V \end{array}\right)$ from Remark \ref{remark 21}. Note that in this case ${\bf V}$ is a unitary operator. Then
$$
{\bf V}^*(P_T-\Pi_2){\bf V}=P_{|T|}-\Pi_2.
$$
Therefore we may compute the norm of the right hand difference. Note that $|T|^2(I+|T|^2)^{-1}-I=-(I+|T|^2)^{-1}$. Then 
$$
P_{|T|}-\Pi_2=\left(\begin{array}{cc} (I+|T|^2)^{-1} & (I+|T|^2)^{-1}|T| \\ |T|(I+|T|^2)^{-1} & -(I+|T|^2)^{-1}\end{array}\right).
$$

We can use the Spectral Theorem for $|T|$ (in the multiplication operator version given in Section 1), and consider $\h=L^2(\mu)$ for $\mu$ finite and $|T|=M_\varphi$, where $\varphi\ge 0 \ \mu$-a.e.  The condition 
$$
\|Tf\|=\||T|f\|\ge r \|f\|
$$
clearly implies that $\varphi(t)\ge r \ \mu$-a.e. We have to estimate the (operator) norm of the matrix
$$
\left(\begin{array}{cc} M_{\frac{1}{1+\varphi^2}} & M_{\frac{\varphi}{1+\varphi^2}} \\ M_{\frac{\varphi}{1+\varphi^2}} & M_{\frac{-1}{1+\varphi^2}} \end{array}\right)
$$
acting in $L^2(\mu)\times L^2(\mu)$. This is the norm in the C$^*$-algebra $M_2(L^\infty(\mu))$  of the matrix function
$$
A_\varphi=\left( \begin{array}{cc} \frac{1}{1+\varphi^2} & \frac{\varphi}{1+\varphi^2} \\ \frac{\varphi}{1+\varphi^2} & \frac{-1}{1+\varphi^2} \end{array}\right).
$$
Indeed, the map 
$$
M_2(L^\infty(\mu))\ni  \left(\begin{array}{cc} \varphi_{11} & \varphi_{12} \\ \varphi_{21} & \varphi_{22} \end{array} \right) \mapsto \left(\begin{array}{cc} M_{\varphi_{11}} & M_{\varphi_{12}} \\ M_{\varphi_{21}} & M_{\varphi_{22}} \end{array} \right)\in\b(L^2(\mu)\times L^2(\mu))
$$
is a faithful $*$-representation.  Note also that 
$$
A_\varphi^2=\left(\begin{array}{cc} \frac{1}{(1+\varphi^2)^2} & 0 \\ 0 & \frac{1}{(1+\varphi^2)^2} \end{array} \right), \ \hbox{ so that } \ \|A_\varphi\|\le \|\frac{1}{1+\varphi^2}\|_{L^\infty(\mu)}\le \frac{1}{1+r^2}<1.
$$
Therefore  $\|P_{|T|}-\Pi_2\|<1$.
\end{proof}
If $N(T)\ne\{0\}$, this norm inequality does not hold: if $0\ne f\in N(T)$, then $(f,0)\in Gr(T)$ and then $(P_T-\Pi_2)(f,0)=(f,0)$, and then $\|P_T-\Pi_2\|=1$. Similarly, if there exist $f_n\in\h$ such that $\|f_n\|=1$ and $Tf_n\to 0$, then $\|P_T-\Pi_2\|=1$.

\begin{coro}
Let $S,T$ be operators as in the statement of Lemma \ref{lema 74}. Then $Gr(T)$ and $Gr(S)$ have a common complement. 
\end{coro}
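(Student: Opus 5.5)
By Lemma \ref{lema 74}, applied once to $T$ and once to $S$, both orthogonal projections $P_T$ and $P_S$ lie at distance strictly less than $1$ from $\Pi_2$:
$$
\|P_T-\Pi_2\|<1 \quad\hbox{ and }\quad \|P_S-\Pi_2\|<1 .
$$
So $\Pi_2$ plays the role of the projection $P$ in Giol's criterion, Theorem \ref{teo giol}, with $\s=Gr(T)$, $\t=Gr(S)$ and $\j=\h\times\h$. That theorem then gives a closed subspace $\z\subset\h\times\h$ with $Gr(T)\dot{+}\z=\h\times\h=Gr(S)\dot{+}\z$.

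The complement can also be named explicitly, and I would check this directly from Theorem \ref{havin}, as in the proof of Theorem \ref{42}. By the Krein--Krasnoselskii--Milman formula, $\|P_T-\Pi_2\|<1$ gives $\|P_T\Pi_1\|<1$ and $\|P_T^\perp\Pi_2\|<1$. The first inequality shows that $Gr(T)\cap(\h\times\{0\})=\{0\}$ and that $Gr(T)+\h\times\{0\}$ is closed. The second shows that $Gr(T)^\perp\cap(\{0\}\times\h)=\{0\}$, so this closed sum is also dense. Hence $Gr(T)\dot{+}(\h\times\{0\})=\h\times\h$, and the same holds for $S$. The common complement is therefore $\z=\h\times\{0\}$.

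This is the mirror image of the bounded case, where $\{0\}\times\h$ was the common complement. The argument is just this combination of earlier results; the only real content is already in the norm estimate of Lemma \ref{lema 74}, so I expect no step here to be an obstacle.
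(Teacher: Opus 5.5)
Your proof is correct, and your first paragraph is exactly the paper's argument: apply Lemma \ref{lema 74} to both $S$ and $T$, then use Giol's criterion (Theorem \ref{teo giol}) with $P=\Pi_2$. Your second paragraph, which identifies $\h\times\{0\}=N(\Pi_2)$ explicitly as the common complement via the Krein--Krasnoselskii--Milman formula and Theorem \ref{havin}, is also correct and makes the appeal to Giol unnecessary. Indeed, the hypotheses force $T^{-1}\in\b(\h)$, so $Gr(T)=\{(T^{-1}g,g):g\in\h\}$ is visibly complemented by $\h\times\{0\}$.
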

\begin{proof}
By  Lemma \ref{lema 74}, $\|P_T-\Pi_2\|<1$ and $\|P_S-\Pi_2\|<1$. Therefore, by the result of Giol \cite{giol} transcribed here in Theorem \ref{teo giol}, $Gr(T)$ and $Gr(S)$ have a common complement.
\end{proof}

\begin{ejem}
Let $A$ and $B$ be diagonal operators, with respect to the orthonormal bases $\{e_n: n\ge 1\}$ and $\{f_k: k\ge 1\}$ of $\h$, and eigenvalues $\alpha_n$ and $\beta_k$ such that $|\alpha_n|, |\beta_k|\to+\infty$. Let $G$ and $L$ be invertible operators. Then $T=GAG^{-1}$ (with dense domain $G(D(A))$) and $S=LBL^{-1}$ (with dense domain $L(D(B))$) satisfy the hypothesis of Lemma \ref{lema 74}. Therefore $Gr(S)$ and $Gr(T)$ have a common complement.
\end{ejem}
\bigskip
The case that remains outside these affirmative situations, is that of two unbounded closed operators, one of them nonselfadjoint. The following example shows that the answer is, in general,  negative.
\begin{ejem}
Consider $\h=\ell^2(\mathbb{N})$, $A$ the diagonal operator with $1,2,3,\dots$ in the diagonal, and $T={\bf S}A$, with ${\bf S}$ the unilateral shift. Clearly $T^*T=A^2$ so that $|T|=A$. The domain of $A$ and $T$ is $D=\{f\in\ell^2(\mathbb{N}): (nf_n)\in\ell^2(\mathbb{N})\}$. Note the following elementary facts:

1) $Gr(A)^\perp\cap Gr(T)=\{0\}$: if $f=Tf$ and $-Af=g$, for $f=(f_1,f_2,\dots)$, $g=(g_1,g_2,\dots)$ in $\ell^2(\mathbb{N})$, then 
	$$
	(f_1,f_2,f_2,f_3,\dots)=(0,g_1,2g_2,3g_3,\dots) \ \hbox{ and } \ (g_1,g_2,g_3,\dots)=-(f_1,2f_2,3f_3,\dots)
	$$
	clearly imply that $f=g=0$.

2) $Gr(A)\cap Gr(T)^\perp$ is one dimensional: $-T^*f=A{\bf S}^*f=g$ and $f=Ag$ mean 
	$$
	(f_1,f_2,f_3,\dots)=(g_1,g_2,g_3,\dots) \ \hbox{ and } \ -(f_2,2f_3,3f_4,\dots)=(g_1,g_2,g_3,\dots),
	$$
	which after elementary computations lead to $f_n=-n^2f_{n+1}$ and $g_n=-n(n+1)g_{n+1}$. These mean that the sequence $f$ (in the intersection) is a multiple of the sequence $\kappa$ given by $\kappa_n=\frac{(-1)^{n+1}}{\left((n-1)!\right)^2}$, which clearly belongs to the domain $D$.

3) The product $P_A^\perp P_T$ is compact. Indeed, note that 
	$$
	P_A^\perp P_T=\left(I-\left(\begin{array}{cc} (I+A^2)^{-1} & (I+A^2)^{-1}A \\ A(I+A^2)^{-1} & A^2(I+A^2)^{-1}\end{array}\right)\right) \left(\begin{array}{cc} (I+A^2)^{-1} & (I+A^2)^{-1}T^* \\ T(I+A^2)^{-1} & T(I+A^2)^{-1}T^*\end{array}\right)
	$$
	$$
	=\left(\begin{array}{cc} A^2(I+A^2)^{-1} & -(I+A^2)^{-1}A \\ -A(I+A^2)^{-1} & (I+A^2)^{-1}\end{array}\right)\left(\begin{array}{cc} (I+A^2)^{-1} & (I+A^2)^{-1}T^* \\ T(I+A^2)^{-1} & T(I+A^2)^{-1}T^*\end{array}\right)
	$$
	whose entries are compact. Indeed:
\begin{itemize}
\item
The $1,1$ entry is $A^2(I+A^2)^{-2}- A(I+A^2)^{-1}T(I+A^2)^{-1}$. The operator $A^2(I+A^2)^{-2}$ is compact: it  is diagonal, with entries  $\frac{n^2}{(1+n^2)^2}$, which tend to zero. Its square root $A(I+A^2)^{-1}$ is also compact, the operator $T(I+A^2)^{-1}=T(I+|T|^2)^{-1}$ is bounded; then $A(I+A^2)^{-1}T(I+A^2)^{-1}$ is compact.
\item
The $1,2$ entry is $A^2(I+A^2)^{-2}T^*+A(I+A^2)^{-1}T(I+A^2)^{-1}T^*$. The operator $B=A^2(I+A^2)^{-2}T^*$ is compact: $BB^*=A^6(I+A^2)^{-4}$ is diagonal with eigenvalues $\frac{n^6}{(1+n^4)^2}\to 0 $. The operator $A(I+A^2)^{-1}T(I+A^2)^{-1}T^*$ is the product of  $A(I+A^2)^{-1}$, which is compact, times  $T(I+A^2)^{-1}T^*=T(I+|T|^2)^{-1}T^*$, which is bounded.
\item
The $2,1$ entry is $-A(I+A^2)^{-2}+(I+A^2)^{-1}T(I+A^2)^{-1}$. The first summand is clearly compact; the second summand is $(I+A^2)^{-1}$ compact, times $T(I+A^2)^{-1}$ bounded.
\item

The $2,2$ entry is $A(I+A^2)^{-2}T^*+(I+A^2)^{-1}T(I+A^2)^{-1}T^*$. The first summand is $A(I+A^2)^{-1}$ compact, times $(I+A^2)^{-1}T^*$ bounded. The second summand $(I+A^2)^{-1}$ compact, times $T(I+A^2)^{-1}T^*$ bounded.
\end{itemize}

Then we can apply a consequence of the results of Lauzon and Treil \cite{lauzontreil}, in the comments in Section 4 in \cite{complemento comun}, which say that two subspaces $\s$ and $\t$, such that:
\begin{itemize}
\item
$\dim \s\cap\t^\perp \ne\dim\s^\perp\cap\t$, and
\item
either $P_\s^\perp P_\t$ or $P_\s P_\t^\perp$ is compact,
\end{itemize}
do not have  a common complement. 
\end{ejem}

{\bf Statements and declarations}

Data sharing not applicable to this article as no data sets were generated or analyzed during the current study.

No financial or non-financial interests  are directly or indirectly related to the work submitted for publication.



\end{document}